\theoremstyle{plain}
\newtheorem{theorem}{Theorem}
\newtheorem*{theorem*}{Theorem}
\newtheorem{corollary}{Corollary}
\newtheorem*{corollary*}{Corollary}
\newtheorem{definition}{Definition}
\newtheorem*{remark*}{Remark}
\def\mathscr{\mathfrak}
\def\N{{\mathbb N}}
\def\Z{{\mathbb Z}}
\def\-{\backslash}
\date{}
\begin{document}

\title{Non-classical linear divisibility sequences\\ and cyclotomic polynomials}

\author{Sergiy Koshkin\\
 Department of Mathematics and Statistics\\
 University of Houston-Downtown\\
 One Main Street\\
 Houston, TX 77002\\
 e-mail: koshkins@uhd.edu}
\maketitle
\begin{abstract}
Divisibility sequences are defined by the property that their elements divide each other whenever their indices do. The divisibility sequences that also satisfy a linear recurrence, like the Fibonacci numbers, are generated by polynomials that divide their compositions with every positive integer power. We completely characterize such polynomials in terms of their factorizations into cyclotomic polynomials using labeled Hasse diagrams, and construct new integer divisibility sequences based on them. We also show that, unlike the Fibonacci numbers, these non-classical sequences do not have the property of strong divisibility.
 
\bigskip

\textbf{Keywords}: Fibonacci numbers, Lucas numbers, divisibility sequence, linear recurrence, cyclotomic polynomial, Hasse diagram, strong divisibility
\end{abstract}

\section{What Mersenne and Fibonacci have in common}\label{S1}

The Mersenne numbers are given explicitly as $M_n:=2^n-1$. The Fibonacci numbers, on the other hand, come from a recurrence relation, $F_{n+2}=F_{n+1}+F_n$, with $F_0=0$ and $F_1=1$. Of course, $M_0=0$ and $M_1=1$, and $M_n$ satisfy a similar recurrence, $M_{n+2}=3M_{n+1}-2M_n$. But both sequences also have a more interesting property in common: if $m$ divides $n$ then $a_m$ divides $a_n$. For example, $F_7=13$ and $F_{14}=377$, and we observe that $377=13\cdot29$. Such sequences are called {\it divisibility sequences} (the name seems to be due to Hall \cite{Hall}), and {\it linear divisibility sequences} if they also satisfy a linear recurrence $a_{n+k}=c_1a_{n+k-1}+\cdots+c_ka_n$, like the Mersenne or Fibonacci numbers. 

Lucas was first to study in depth the second order ($k=2$) linear divisibility sequences in 1876-1880. He showed that, up to normalization, non-trivial such sequences are of the form $\frac{\alpha^n-\beta^n}{\alpha-\beta}$, where $\alpha$ and $\beta$ are the roots of a quadratic polynomial with integer coefficients. The polynomial is then $x^2-(\alpha+\beta)x+\alpha\beta$, and the recurrence is $a_{n+2}=(\alpha+\beta)a_{n+1}-\alpha\beta a_n$, so the sequence consists of integers even if $\alpha$ and $\beta$ are irrational. For the Fibonacci numbers, the Lucas's form is given by the Binet formula, $F_n=\frac{(\frac{1+\sqrt{5}}2)^n-(\frac{1-\sqrt{5}}2)^{n}}{\sqrt{5}}$. Based on his work with divisibility sequences, Lucas devised primality tests that helped establish the primality of $M_{127}$. The search for large Mersenne primes is ongoing, now with the help of the Internet.

But what is behind the peculiar divisibility property? This question leads from numerical sequences to sequences of polynomials. It still makes sense to talk about divisibility sequences of polynomials, e.g. $x^n$ is one. Slightly less obviously, so is $x^n-1$, because $x^{dm}-1$ has $x^m-1$ as a factor by the difference of powers formula. The Fibonacci polynomials are a more non-trivial example \cite{WP}. Up to normalization by $\frac1{\alpha-\beta}$, the Lucas sequences are products of $\beta^n$ and $(\frac{\alpha}{\beta})^n-1$, which are obtained by choosing a value for $x$ in $x^n$ and $x^n-1$, respectively. More generally, the product of two linear divisibility sequences, of integers or polynomials, is again a linear divisibility sequence, so it is natural to consider products $a_n=A\gamma^n\prod_i(\gamma_i^n-1)$ that generalize Lucas sequences. One such example was studied by Lehmer \cite{Leh}, who refined and extended Lucas's primality tests, and more by Ward \cite{Wd38}, back in 1930s. A general theory of such sequences was developed recently in \cite{RWG}, where one can also read more about their history and applications in algebra and number theory. Other old and new results on linear divisibility sequences and their applications can be found in the book \cite{EPS}. 

But $x^n$ and $x^n-1$ are just $x$ and $x-1$ composed with $x^n$. Classical authors thought that they generate ``almost all" linear divisibility sequences, with some special exceptions \cite{Wd38}, but they are themselves very special. One such exception, generated by composing $f(x)=(x+1)(x^m-1)$, $m$ odd, with $x^n$ is considered in \cite{Oos} (but was probably known already to Ward). What other  $f(x^n)$, with a polynomial $f$, are divisibility sequences? 
\begin{definition}
We call a polynomial $f(x)$ with complex coefficients a {\it divisibility polynomial} if $f(x^n)$ is a divisibility sequence, i.e., $f(x^m)|f(x^n)$ when $m|n$. It is enough that $f(x)|f(x^n)$ for all $n\in\mathbb{N}$, since then $f(x^m)|f(x^n)=f((x^m)^d)$ for $n=md$. 
\end{definition}
\noindent The divisibility polynomials are called Lucas polynomials in \cite{RWG}, but that name is already in use for other purposes. 
In 1988 B\'ezivin, Peth\"o and van der Poorten proved more generally \cite{BPP}, see also  \cite{Oos}, \cite{RWG}, that non-degenerate (in a precise sense) linear divisibility sequences of integers are always of the form 
\begin{equation}\label{BPPForm}
a_n=An^k\prod_if_i(\gamma_i^n),
\end{equation}
where $A$, $\gamma_i$ are complex numbers, $k$ is a nonnegative integer, and $f_i$ are divisibility polynomials. 

Not all sequences of this form are integer-valued, of course, and the integrality conditions are still not known in general. But we will fully describe the divisibility polynomials (Theorem \ref{DivPolyChar}), and explain where the ``exceptions" come from, and why they are more of a rule than exception. For example, if the positive integers $N_i$ are pairwise relatively prime then
\begin{equation}\label{DisDiv}
f(x)=\frac{(x^{N_1}-1)\cdots(x^{N_k}-1)}{(x-1)^{k-1}}
\end{equation}
is a divisibility polynomial. When $k=1$ we get the classical $f(x)=x-1$, and when $k=2,N_1=2$ and $N_2=m$ odd we get the ``exception" $f(x)=(x+1)(x^m-1)$. 

The divisibility polynomials have an interesting factorization theory (Sections \ref{S2'}-\ref{S3}), and can be nicely described as certain products of cyclotomic polynomials. The cyclotomic (literally, circle dividing) polynomials are classical, and also have ancient roots. Gauss introduced them in 1796 to solve an old Greek problem of inscribing regular polygons into the circle. Our description will be visual, we will introduce Hasse diagrams with additional labels that show how to build divisibility polynomials from cyclotomic polynomials (Section \ref{S2}). 

It turns out that Lucas's integer divisibility sequences generalize accordingly. Namely, if $f(x,y):=y^{\textrm{deg}\,(f)}f(x/y)$ is a homogenized divisibility polynomial, and $\alpha+\beta,\alpha\beta$ are integers, then $\frac{f(\alpha^n,\beta^n)}{f(\alpha,\beta)}$ is a divisibility sequence of integers (Theorem \ref{IntSeq}). When $f(x)=x-1$, or, more generally, $x^N-1$, we get Lucas's sequences. But for other $f(x)$, such as in \eqref{DisDiv}, the resulting sequences are {\it non-classical} -- they are not even products of Lucas's sequences. Nonetheless, they are closely related to them (Section \ref{S3'}). For the Fibonacci values of $\alpha,\beta$, the simplest one is $\frac12L_nF_{3n}$, where $L_n$ are the Lucas numbers. More generally, from the template \eqref{DisDiv} for $f(x)$ we get the sequence 
$$
\frac{F_{N_{1}n}/F_{N_{1}}\cdots F_{N_{k}n}/F_{N_{k}}}{(F_n)^{k-1}}\,.
$$

We will also characterize polynomials $f(x)$ with the property of {\it strong divisibility} (Theorem \ref{StrongDiv}), namely satisfying $\text{gcd}(f_m,f_n)=f_{\text{gcd}(m,n)}$, where $f_n(x)=f(x^n)$. A new connection between strong divisibility and cyclotomic polynomials was recently discovered in \cite{BFLS}, other known connections are discussed in \cite{Kimb}. This is another property that the Mersenne and Fibonacci numbers have in common, and it is distinctive of the classical sequences. Under some mild conditions on $\alpha,\beta$, our generalized sequences are strong divisibility sequences if and only if they are classical (Theorem \ref{StDiv}).

\section{Cyclotomic polynomials and Hasse diagrams}\label{S2}

The divisibility polynomials are clearly very special, but there are more of them than one might think. Let us call a polynomial {\it normal} if it has the leading coefficient $1$, and a non-zero constant term. Then any polynomial is of the form $Cx^sg(x)$ for some constant $C$, integer $s\geq0$, and normal $g(x)$. In this section we will develop diagrams that encode the structure of normal divisibility polynomials. 

We start with a couple of simple observations. Let $\zeta$ be a root of a divisibility polynomial $f$. Since $f(x)|f(x^n)$ and $f(\zeta)=0$ we have $f(\zeta^n)=0$ for all positive integers $n$. But a polynomial can not have infinitely many roots, so $\zeta^k=\zeta^l$ for $k\neq l$. This means that $\zeta$ is either $0$ or $\zeta^n=1$ for some $n\geq1$. Such $\zeta$ are called {\it $n$-th roots of unity}, and they are of the form $\zeta^{k}_n:=e^{\frac{2\pi i}{n}k}$. If $k\,|\,n$ then $\zeta^{k}_n=\zeta^1_{n/k}$ is also an $n/k$-th root of unity. Those roots that are not roots for smaller $n$ are called 
{\it primitive of order $n$}. By the elementary properties of cyclic groups, those are exactly the ones with $\text{gcd}(n,k)=1$. B\'ezivin, Peth\"o and van der Poorten \cite{BPP} found a necessary and sufficient condition on the set of roots that makes a polynomial a divisibility polynomial.
\begin{theorem}\label{DivRoot} Consider a polynomial $f$ with roots of unity as roots. Given a pair of them, let $h,h'$ denote their orders as primitive roots of unity, and $m,m'$ their multiplicities in $f$. Then $f$ is a divisibility polynomial if and only if for any such pair with $h'|h$ we have $m'\geq m$. 
\end{theorem}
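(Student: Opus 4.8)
The plan is to reduce the divisibility condition on $f$ to a condition on the multiplicities of its roots, and then translate that into the statement about orders. Since every root of $f$ is a root of unity, $f$ splits completely into linear factors over $\C$, so for any $n$ the divisibility $f(x)\,|\,f(x^n)$ holds if and only if every root $\zeta$ of $f$ occurs in $f(x^n)$ with multiplicity at least its multiplicity in $f$. By the criterion in the Definition, $f$ is a divisibility polynomial exactly when this holds for every $n\in\N$.

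The key computation I would carry out first is the multiplicity identity: for any root of unity $\zeta$ and any $n$,
\[
\mathrm{mult}_{\,f(x^n)}(\zeta)=\mathrm{mult}_{\,f}(\zeta^n).
\]
To see this, set $\omega=\zeta^n$ and factor $f(y)=(y-\omega)^{\mu}g(y)$ with $g(\omega)\neq0$ and $\mu=\mathrm{mult}_f(\omega)$. Substituting $y=x^n$ gives $f(x^n)=(x^n-\omega)^{\mu}g(x^n)$. Because $\zeta\neq0$, the polynomial $x^n-\omega$ has $n$ \emph{distinct} roots, all simple, and $\zeta$ is one of them, while $g(\zeta^n)=g(\omega)\neq0$. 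Hence the multiplicity of $\zeta$ in $f(x^n)$ equals $\mu$, as claimed. Consequently $f$ is a divisibility polynomial if and only if
\[
\mathrm{mult}_f(\zeta)\le\mathrm{mult}_f(\zeta^n)\qquad\text{for every }n\in\N\text{ and every root }\zeta\text{ of }f.
\]

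Next I would translate this into the language of orders. If $\zeta$ is primitive of order $h$, then $\{\zeta^n:n\in\N\}$ is precisely the cyclic group generated by $\zeta$, namely the full set of $h$-th roots of unity, which consists of exactly those roots of unity whose order $h'$ divides $h$. Therefore the displayed inequality for all $n$ is equivalent to
\[
\mathrm{mult}_f(\zeta)\le\mathrm{mult}_f(\omega)\qquad\text{for every root of unity }\omega\text{ of order }h'\,|\,h.
\]
Quantifying over all roots $\zeta$ of $f$ yields exactly the asserted condition $m'\ge m$ for every pair with $h'\,|\,h$, where $m,m'$ are the multiplicities of the order-$h$ and order-$h'$ roots. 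Both directions of the theorem then fall out of this single chain of equivalences.

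The one point demanding care is that the multiplicities must be read as multiplicities in $f$ of \emph{all} roots of unity of order dividing $h$, with the convention that a root of unity absent from $f$ has multiplicity $0$. This is what makes the condition genuinely restrictive: a root $\zeta$ of order $h$ with $m\ge1$ forces every root of unity of order dividing $h$ to occur in $f$ with multiplicity at least $m$. For instance $f(x)=x^2+1=\Phi_4(x)$ is \emph{not} a divisibility polynomial, since $i$ has order $4$ while the order-$2$ root $-1$ is missing (indeed $x^2+1\nmid x^4+1$). The main obstacle is thus not the orbit analysis, which is elementary cyclic-group theory, but establishing the multiplicity identity cleanly; once it is in hand, the characterization follows at once.
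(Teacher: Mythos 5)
Your proof is correct, and its mathematical core coincides with the paper's: both arguments ultimately rest on the facts that $x^n-\omega$ has only simple roots when $\omega\neq0$, and that $\zeta$ is a root of $x^n-\xi$ precisely when $\xi=\zeta^n$. The difference is in the packaging. The paper proves the two implications separately: for sufficiency it verifies $(x-\zeta)^m\,|\,(x^n-\zeta^n)^{m'}$ factor by factor, and for necessity it locates the unique factor $x^d-\zeta^d$ of $f(x^d)$ that $x-\zeta$ can divide. You instead isolate the single identity $\mathrm{mult}_{f(x^n)}(\zeta)=\mathrm{mult}_f(\zeta^n)$ as a lemma and obtain both directions at once from one chain of equivalences; this is tidier, makes the quantifier structure transparent, and costs nothing in rigor. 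A further merit of your write-up is the explicit convention that roots of unity absent from $f$ carry multiplicity $0$, illustrated by $\Phi_4(x)=x^2+1$: the paper's forward direction (``then $\zeta^n$ is also a root, of multiplicity $m'\geq m$'') silently relies on exactly this reading of the hypothesis, since without it the stated condition would be vacuously satisfied by $x^2+1$, which is not a divisibility polynomial. So your closing remark is not pedantry; it pins down how the statement must be parsed for either proof to go through.
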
 
\begin{proof}First suppose that the roots of $f(x)$ satisfy the conditions of the theorem. We want to show that $f(x)|f(x^n)$. If $\zeta$ is a root of $f(x)$ of multiplicity $m$ then $\zeta':=\zeta^n$ is also a root, of multiplicity $m'\geq m$. Since $(x-\zeta^n)^{m'}$ is a factor of $f(x)$ we have that $(x^n-\zeta^n)^{m'}$ is a factor of $f(x^n)$. But $(x-\zeta)|(x^n-\zeta^n)$, so $(x-\zeta)^m|(x^n-\zeta^n)^{m'}$, as $m'\geq m$. Since this is true for all roots $f(x)|f(x^n)$.

Conversely, let $f$ be a divisibility polynomial, and $\zeta,\zeta'$ be a pair roots from the statement of the theorem. Since $h|h'$, by the standard properties of cyclic groups, there is $d\in\N$ such that $\zeta':=\zeta^d$. We have $(x-\zeta)^m$ dividing $f(x)$ and hence $f(x^d)$. But $f(x^d)$ splits into factors of the form $x^d-\xi$, where $\xi$ is a root of $f(x)$, and $(x-\zeta)$ must divide at least one of them. This is only possible if $\xi=\zeta^d=\zeta'$, and for $(x-\zeta)^m$ to divide $(x^d-\zeta^d)^{m'}$ we must have $m'\geq m$.
\end{proof}
\noindent Note that any primitive root of unity of some order is a power of any other primitive root of the same order. So if $\zeta$ is a root of $f(x)$ then so is every other primitive root of the same order, since $f(\zeta)|f(\zeta^n)$, and, by Theorem \ref{DivRoot}, they all have the same multiplicity. The product of $(x-\zeta)$ factors over all primitive roots $\zeta$ of order $n$ is called the {\it $n$-th cyclotomic polynomial} $\Phi_n(x)=\prod_{\text{gcd}(n,k)=1}(x-\zeta_n^k)$. It follows that a normal divisibility polynomial $f(x)$ is a product of powers of cyclotomic polynomials. 

Gauss derived from the definition of cyclotomic polynomials that
\begin{equation}\label{AllDiv}
x^n-1=\prod_{d|n}\Phi_d(x)\,.
\end{equation}
This is because any $n$-the root of unity must be primitive of some order that divides $n$. Gauss's formula allows us to find $\Phi_n$ once $\Phi_d$ for $d<n$ are already known, without any recourse to complex numbers, by splitting off $\Phi_n$ from the product on the right. It also shows, by the Gauss's lemma, that their coefficients are integers, and the leading coefficient is always $1$. It is easy to find them recursively from \eqref{AllDiv}: $\Phi_1(x)=x-1$, $\Phi_2(x)=x+1$, $\Phi_3(x)=x^2+x+1$, $\Phi_4(x)=x^2+1$, $\Phi_6(x)=x^2-x+1$, etc.

What kinds of cyclotomic products are the divisibility polynomials? Suppose $\Phi_{h}(x)$ appears as a factor in a divisibility polynomial. Theorem \ref{DivRoot} tells us that if $d|h$ then $\Phi_{d}(x)$ must also be a factor. In particular, $\Phi_{1}(x)=x-1$ is always a factor. We are led to the following property, known in algebra. 
\begin{definition}
A subset $\Lambda\subset\mathbb{N}$ is called {\it saturated} if $d|h$ and $h\in\Lambda$ imply that $d\in\Lambda$, i.e., if along with any of its elements $\Lambda$ also contains all of its divisors. 
\end{definition}
Divisibility relations among positive integers in a subset can be conveniently pictured by {\it Hasse diagrams}. Given a subset $\Lambda$, at the lowest level of the diagram we place $1$ (if it is in $\Lambda$), at the next level all prime numbers in $\Lambda$ (if any), next up are their pairwise products, and so on, see Figure\,\ref{MultiHasse}a. The edges connect the numbers to the numbers one level up which they divide, so upward paths in the diagram reflect the ``increase" in divisibility. It is convenient to assign the empty diagram to the empty set $\emptyset$.
\begin{figure}[!ht]
\begin{centering}
(a)\ \ \ \includegraphics[scale=0.9]{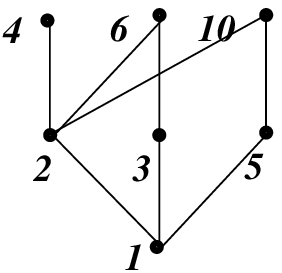} \hspace{0.7in} (b)\ \ \ \includegraphics[scale=0.9]{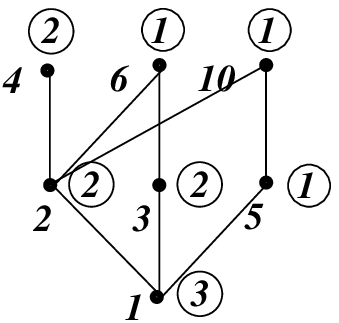}\par
\par\end{centering}
\caption{\label{MultiHasse}(a) Hasse diagram of $\{1,2,3,4,5,6,10\}$; (b) Hasse diagram of $\Phi_1^3\Phi_2^2\Phi_3^2\Phi_4^2\Phi_5\Phi_6\Phi_{10}$. The circled numbers are the multiplicities.}
\end{figure}

The Hasse diagrams do not reflect the multiplicities of factors $\Phi_h$ though. Those can be pictured as additional non-negative integer labels. Theorem \ref{DivRoot} imposes a restriction on these labels. For a product of cyclotomic polynomials $\prod_{h}\big(\Phi_h(x)\big)^{m_h}$ to be a divisibility polynomial, ``less divisible" primitive orders $h$ must have bigger multiplicities $m_h$. Since divisibility ``increases" along the upward paths in a Hasse diagram the multiplicity labels must not increase when moving up along the edges, Figure\,\ref{MultiHasse}b. To make this more precise, let us restate it in terms of maps. 
\begin{definition}
We call maps $\lambda:\N\to\N\cup\{0\}$, that are non-zero at only finitely many integers, {\it multiplicity maps}. And we call them {\it order-reversing} if 
$h|h'$ implies $\lambda(h)\geq\lambda(h')$. 
\end{definition}
\noindent Given a product of powers of cyclotomic polynomials as above, $\lambda:h\mapsto m_h$ is such a map. And conversely, any multiplicity map $\lambda$ defines a polynomial 
$$
\Phi_\lambda(x):=\prod_{h\in\N}\big(\Phi_h(x)\big)^{\lambda(h)}.
$$ 
This makes sense since all but finitely many factors have the exponent $0$. If we assign the polynomial $1$ to the zero map (and the empty diagram) then Theorem \ref{DivRoot} can be restated as follows.
\begin{corollary}\label{1-1Cor} There is a $1$-$1$ correspondence between order-reversing multiplicity maps, multiplicity labeled Hasse diagrams of finite saturated sets, and the normal divisibility polynomials. 
\end{corollary}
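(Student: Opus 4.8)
The plan is to treat the order-reversing multiplicity map as the central object and to exhibit it as a common coordinate system for the other two descriptions, establishing two bijections that share it. For the correspondence with normal divisibility polynomials I would first invoke the fact proved just before the statement, that every normal divisibility polynomial is a product of powers of cyclotomic polynomials. Over $\C$ such an $f$ splits into linear factors, and by the remark following Theorem \ref{DivRoot} all primitive roots of a given order $h$ occur with one common multiplicity $m_h$; grouping the linear factors according to the order of their roots therefore gives a representation $f=\prod_h\big(\Phi_h(x)\big)^{m_h}$ that is unique, since the multiplicity of each primitive $h$-th root is determined by $f$. Setting $\lambda(h):=m_h$ produces a map non-zero at only finitely many $h$, and Theorem \ref{DivRoot} asserts precisely that $f$ is a divisibility polynomial if and only if $\lambda$ is order-reversing: after relabeling, the theorem's hypothesis $h'|h\Rightarrow m'\ge m$ is the condition $h|h'\Rightarrow\lambda(h)\ge\lambda(h')$. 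Conversely, any order-reversing $\lambda$ yields $\Phi_\lambda=\prod_h\big(\Phi_h(x)\big)^{\lambda(h)}$, which is normal because each $\Phi_h$ is monic with non-zero constant term, and is a divisibility polynomial again by Theorem \ref{DivRoot}. Uniqueness of the grouped cyclotomic factorization makes these two assignments mutually inverse.

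For the correspondence with labeled Hasse diagrams I would pass through the support $\Lambda:=\{h\in\N : \lambda(h)>0\}$. First I would verify that $\Lambda$ is saturated: if $h\in\Lambda$ and $d|h$, then order-reversal gives $\lambda(d)\ge\lambda(h)>0$, so $d\in\Lambda$, while finiteness is built into the definition of a multiplicity map. The data of $\lambda$ is then exactly a finite saturated set $\Lambda$ together with a positive integer attached to each node, which is a multiplicity-labeled Hasse diagram. The one point needing care is that order-reversal is stated for all divisibility relations $h|h'$, whereas a Hasse diagram records only covering edges. I would reconcile the two by a chain argument: any relation $h|h'$ refines into a chain $h=d_0|d_1|\cdots|d_r=h'$ of covering relations, so labels that do not increase along each edge force $\lambda(h)=\lambda(d_0)\ge\cdots\ge\lambda(d_r)=\lambda(h')$, and the reverse implication is immediate since every edge is itself a divisibility relation. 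Hence order-reversing maps with support $\Lambda$ correspond bijectively to admissible labelings of the Hasse diagram of $\Lambda$, and letting $\Lambda$ range over all finite saturated sets completes this bijection.

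Composing the two bijections gives the asserted three-way correspondence. I do not expect a genuine obstacle, since the statement is essentially a repackaging of Theorem \ref{DivRoot} and the work is bookkeeping. The two places I would watch most carefully are the well-definedness and injectivity of the passage to polynomials, which rest on the uniqueness of the grouped cyclotomic factorization and on distinct maps producing distinct root multiplicities, and the edge-versus-relation equivalence for order-reversal, handled by the chain refinement above. I would also record the boundary conventions, assigning the zero map and the empty diagram to the polynomial $1$, so that these degenerate cases are included and the correspondence matches the conventions fixed in the text.
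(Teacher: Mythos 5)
Your proposal is correct and follows essentially the same route as the paper, which presents this corollary as a direct restatement of Theorem \ref{DivRoot} combined with the preceding observations (normal divisibility polynomials are products of powers of cyclotomic polynomials, order-reversing maps have saturated support, and the zero map/empty diagram corresponds to the polynomial $1$). The paper gives no separate proof, and your write-up simply makes explicit the same bookkeeping — uniqueness of the cyclotomic factorization, the edge-versus-relation chain argument, and the boundary conventions — that the paper leaves implicit.
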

There is also a nice algebraic way to describe finite saturated sets that will come handy later. If $\Lambda$ is finite there must be $n_1,\dots,n_k\in\Lambda$ that do not divide any of its other elements. They are maximal in terms of divisibility, and they are, obviously, uniquely determined by $\Lambda$. But for finite saturated sets the converse is also true, because upward paths in a finite Hasse diagram must terminate at a maximal element. 
\begin{corollary}\label{<n>} Finite saturated subsets of $\N$ are of the form:
\begin{equation}
\langle n_1,\dots,n_k\rangle:=\{d\in\N\,\big|\,d\,|\,n_i\text{ for some }i\}\,.
\end{equation}
\end{corollary}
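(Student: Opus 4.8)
The plan is to pin down the $n_i$ explicitly as the maximal elements of $\Lambda$ under divisibility and then verify the set equality $\Lambda=\langle n_1,\dots,n_k\rangle$ by proving two inclusions. Concretely, given a finite saturated set $\Lambda$, I would call an element $n\in\Lambda$ \emph{maximal} if it divides no other element of $\Lambda$, and let $n_1,\dots,n_k$ be the (finitely many, since $\Lambda$ is finite) maximal elements. Before this, I would quickly dispose of the easy converse direction, which makes the statement a genuine characterization: for any choice of $n_1,\dots,n_k$ the set $\langle n_1,\dots,n_k\rangle$ is saturated, because $e\mid d$ and $d\mid n_i$ give $e\mid n_i$, and it is finite because it sits inside the union of the finite divisor sets of $n_1,\dots,n_k$.

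For the inclusion $\langle n_1,\dots,n_k\rangle\subseteq\Lambda$, I would observe that each $n_i$ lies in $\Lambda$ by construction, so any $d$ with $d\mid n_i$ lies in $\Lambda$ by saturation. For the reverse inclusion $\Lambda\subseteq\langle n_1,\dots,n_k\rangle$, I would take an arbitrary $h\in\Lambda$ and show it divides one of the $n_i$. If $h$ is itself maximal we are done; otherwise there is $h_1\in\Lambda$ with $h\mid h_1$ and $h\neq h_1$, so $h<h_1$ as integers. Iterating yields a strictly increasing sequence $h<h_1<h_2<\cdots$ of elements of $\Lambda$, which must terminate because $\Lambda$ is finite; its terminal element is maximal, hence equal to some $n_i$, and transitivity of divisibility gives $h\mid n_i$, as required.

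The only step needing any care, and what the phrase about upward paths terminating in the Hasse diagram is really encoding, is this termination argument: divisibility on a finite set admits no infinite ascending chains. I do not expect a genuine obstacle, since a proper divisibility relation strictly increases integer magnitude, so the chain is strictly increasing and bounded above by $\max\Lambda$. Finally, I would record the degenerate case $\Lambda=\emptyset$: there are then no maximal elements, $k=0$, and the empty join $\langle\,\rangle$ equals $\emptyset$, consistent with the convention assigning the empty diagram to the empty set.
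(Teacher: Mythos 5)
Your proof is correct and follows essentially the same route as the paper: identify the divisibility-maximal elements $n_1,\dots,n_k$ of $\Lambda$ and argue that every element divides one of them because ascending divisibility chains in a finite set must terminate, with saturation giving the reverse inclusion. Your write-up simply makes explicit the termination argument and the two inclusions that the paper compresses into the remark about upward paths in a finite Hasse diagram terminating at a maximal element.
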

\noindent In particular, $\langle n\rangle$ is the set of all (positive) divisors of $n$. Figures \ref{MultiHasse} and \ref{MultDecomp} contain saturated sets $\langle 4,6,10\rangle$, $\langle pq\rangle$ and $\langle p,q,r\rangle$, for distinct primes $p,q,r$.

Finally, some notational conventions. Recall that the indicator function of $\Lambda$ is:
$$
1_\Lambda(h):=\begin{cases}1,& h\in\Lambda\\0,& h\not\in\Lambda\,.\end{cases}
$$
It is a map, and we abbreviate $\Phi_{1_\Lambda}$ as $\Phi_\Lambda$. This is simply the product $\prod_{d\in\Lambda}\Phi_d(x)$, and it means that $\Phi_{\langle n_1,\dots,n_k\rangle}(x)$ is the product of $\Phi_d(x)$, where $d$ runs over the set $\langle n_1,\dots,n_k\rangle$ of divisors of at least one of the positive integers $n_1,\dots,n_k$. They are natural generalizations of $\Phi_{\langle n\rangle}(x)=x^n-1$, and will play a major part in what follows.

\section{Factorization of divisibility polynomials}\label{S2'}

With the Hasse diagrams at hand, it is easier to take stock of the divisibility polynomials. We now realize that there are  many more of them than contemplated by the classical authors \cite{Wd38}. A natural idea is to break them up into simpler pieces in some way. Note that a product of divisibility polynomials is again a divisibility polynomial, in other words, divisibility polynomials form a set closed under multiplication. In such sets a natural way to decompose is to factor. Integers factor into primes, (general) polynomials factor into linear factors (over complex numbers), etc. And those do not factor any further.
\begin{definition}
A non-constant divisibility polynomial is called decomposable if it is the product of two non-constant
divisibility polynomials. Otherwise, it is called indecomposable.
\end{definition}
We do not yet know which divisibility polynomials are indecomposable, but here again we can make use of the diagrams. The ones associated with finite saturated sets look simpler than those with additional multiplicity labels on them. The idea of decomposing into them is pictured in Figure \ref{MultDecomp}a. We represent multiplicities by nodes stacked over the diagram, and then slice the layers of nodes horizontally. Each slice projects to a finite saturated set that defines a factor. This idea turns out to work. Recall that $\Phi_\lambda$ denotes the divisibility polynomial corresponding to the multiplicity map $\lambda$.
\begin{figure}[!ht]
\begin{centering}
(a)\ \ \ \includegraphics[width=0.21\textwidth]{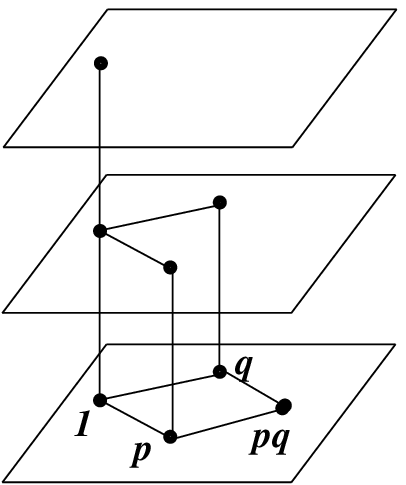} \hspace{0.7in} (b)\ \ \ \includegraphics[width=0.22\textwidth]{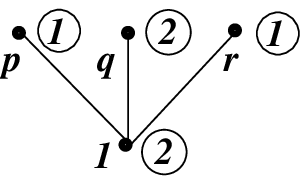}\par
\end{centering}
\caption{\label{MultDecomp} (a) Slicing decomposition of $\Phi_1^3\Phi_p^2\Phi_q^2\Phi_{pq}=\Phi_{\langle 1\rangle}\Phi_{\langle p,q\rangle}\Phi_{\langle pq\rangle}$; (b) Divisibility polynomial $\Phi_1^2\Phi_p\Phi_q^2\Phi_{r}$ with a non-unique decomposition ($p,q,r$ are distinct primes).}
\end{figure}
\begin{theorem}[{\bf Slicing factorization}]\label{Slice} A normal divisibility polynomial is indecomposable if and only if each of its cyclotomic factors has multiplicity $1$. It suffices that $\Phi_1(x)$ has multiplicity $1$. In other words, indecomposable normal divisibility polynomials are $\Phi_{\langle n_1,\dots,n_k\rangle}$ with $n_i\nmid n_j$ for $i\neq j$. Any normal divisibility polynomial $\Phi_\lambda$ factorizes into indecomposables: 
\begin{equation}\label{Lambda_j}
\displaystyle{\Phi_\lambda=\prod_{j=1}^{\lambda(1)}\Phi_{\Lambda_j}}\textrm{, where }\Lambda_j:=\{m\in\N\,|\,\lambda(m)\geq j\}\,.
\end{equation}
\end{theorem}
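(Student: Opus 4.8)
The plan is to work entirely through the multiplicity map $\lambda$ and exploit the single structural fact that, since $1\mid h$ for every $h$, the order-reversing condition forces $\lambda(1)=\max_h\lambda(h)$; in particular $\lambda(1)\geq\lambda(h)\geq 1$ whenever $\Phi_h$ is a factor, so $\Phi_1$ divides every non-constant normal divisibility polynomial (as already noted before the definition of saturated sets).

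First I would establish the factorization formula \eqref{Lambda_j}, which is the easy, bookkeeping half. For each $j$ with $1\leq j\leq\lambda(1)$ the set $\Lambda_j=\{m:\lambda(m)\geq j\}$ is saturated: if $h\in\Lambda_j$ and $d\mid h$ then $\lambda(d)\geq\lambda(h)\geq j$ by order-reversal, so $d\in\Lambda_j$. Hence each $\Phi_{\Lambda_j}$ is a genuine normal divisibility polynomial (Corollary \ref{1-1Cor}) all of whose cyclotomic factors have multiplicity $1$. To see that the product equals $\Phi_\lambda$, I would compute the multiplicity of a fixed $\Phi_h$ on the right: it is the number of indices $j\in\{1,\dots,\lambda(1)\}$ with $h\in\Lambda_j$, i.e. with $\lambda(h)\geq j$, which is exactly $\lambda(h)$ since $\lambda(h)\leq\lambda(1)$. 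Matching multiplicities on both sides gives the identity.

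Next I would dispatch the decomposability direction. If $\lambda(1)\geq 2$ then \eqref{Lambda_j} exhibits $\Phi_\lambda$ as a product of at least two factors, and each $\Phi_{\Lambda_j}$ is non-constant because $1\in\Lambda_j$ for every $j\leq\lambda(1)$; thus $\Phi_\lambda$ is decomposable. Contrapositively, an indecomposable normal divisibility polynomial has $\lambda(1)=1$, and since $\lambda(1)$ is the maximum value this forces every multiplicity into $\{0,1\}$. Because $\lambda(1)$ is this maximum, the three conditions ``$\Phi_1$ has multiplicity $1$'', ``$\lambda(1)=1$'', and ``every cyclotomic factor has multiplicity $1$'' coincide---this is the ``it suffices'' clause---and in that case $\lambda=1_\Lambda$ for the saturated set $\Lambda=\{m:\lambda(m)=1\}$; writing $\Lambda=\langle n_1,\dots,n_k\rangle$ via its maximal elements (Corollary \ref{<n>}, with $n_i\nmid n_j$ for $i\neq j$ automatic from maximality) yields the stated normal form $\Phi_{\langle n_1,\dots,n_k\rangle}$.

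The converse---that all multiplicities equal to $1$ implies indecomposability---is where the only real subtlety lies, and it rests on the $\Phi_1$ observation above. Suppose $\Phi_\lambda=gh$ with $g,h$ non-constant divisibility polynomials. I would first normalize: each of $g,h$ has roots only among the roots of $\Phi_\lambda$, hence no root at $0$, so $g=C_1g_0$ and $h=C_2h_0$ with $g_0,h_0$ normal; scaling by a nonzero constant preserves the divisibility-polynomial property (from $g(x^n)=q(x)g(x)$ one gets $g_0(x^n)=q(x)g_0(x)$), so $g_0,h_0$ are non-constant normal divisibility polynomials and $C_1C_2=1$ upon comparing leading coefficients. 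Then $\Phi_1$ divides both $g_0$ and $h_0$, whence $\Phi_1^2\mid g_0h_0=\Phi_\lambda$, contradicting $\lambda(1)=1$. I expect this normalization-and-$\Phi_1$ step to be the main obstacle, since it is the one place where the mere \emph{divisibility} of the factors (rather than normality, which is not assumed in the definition of decomposable) must be converted into the clean monic cyclotomic-product form before Corollary \ref{1-1Cor} can be applied.
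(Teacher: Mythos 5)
Your proof is correct and follows essentially the same route as the paper's: the same slicing sets $\Lambda_j$ with the same saturation check and multiplicity count, and the same key observation that a nontrivial factorization would force $\Phi_1$ to appear with multiplicity at least $2$ (the paper phrases this as $\lambda(1)=\mu(1)+\nu(1)\geq 2$). The only difference is that you spell out the normalization of the factors---reducing arbitrary non-constant divisibility-polynomial factors to normal ones before invoking Corollary \ref{1-1Cor}---a step the paper leaves implicit when it writes the factorization directly as $\Phi_\mu\Phi_\nu$.
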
 
\begin{proof} Saying that all cyclotomic factors of $\Phi_\lambda$ have multiplicity $1$ is equivalent to saying that $\lambda=1_\Lambda$ for a finite saturated set $\Lambda$, or that $\lambda$ is an order-reversing multiplicity map with $\lambda(1)=1$. 
Indeed, since $1|m$ and $\lambda$ is order-reversing $\lambda(m)\leq\lambda(1)$ for any $m\in\N$ . If $\lambda(1)=1$ then $\lambda(m)$ is $0$ or $1$. If $\lambda(1)=0$ then $\lambda$ is the zero map and $\Phi_\lambda=1$, otherwise it is $1_\Lambda$ for some finite saturated set $\Lambda$. Moreover, if $\Phi_\lambda=\Phi_\mu\Phi_\nu$ were a non-trivial factorization then $\lambda(1)=\mu(1)+\nu(1)\geq2$, so none exists if $\lambda(1)=1$.

Conversely, if $\lambda(1)\geq2$ we can factorize $\Phi_\lambda$ explicitly, see Figure \ref{MultDecomp}a. Let $\Lambda_j$ be the projection of the $j$-th layer of nodes above the diagram. Clearly, $\lambda=\sum_{j=1}^{\lambda(1)}1_{\Lambda_j}$, hence $\Phi_\lambda=\prod_{j=1}^{\lambda(1)}\Phi_{\Lambda_j}$. Moreover, if $m\in\Lambda_j$ and $d|m$ then $\lambda(d)\geq\lambda(m)\geq j$, so $d\in\Lambda_j$, and $\Lambda_j$ is a saturated set by definition.
\end{proof}

Although the slicing factorization has much to recommend itself, it is not unique. Consider the multiplicity map depicted in Figure\,\ref{MultDecomp}b. Its slicing factorization is $\Phi_{\langle q\rangle}\Phi_{\langle p,q,r\rangle}$, but $\Phi_{\langle p,q\rangle}\Phi_{\langle q,r\rangle}$ also factorizes it into indecomposables. The situation is not unlike the simple example of multiplication of $3n+1$ numbers $1,4,7,10,13,16,19,22$..., whose set is also closed under multiplication. We have $4\cdot55=10\cdot22$, and $4,22,10,55$ do not decompose into other $3n+1$ numbers. However, it is a simple exercise to show that if a $3n+1$ number has a $3n+1$ factor, then the quotient is also a $3n+1$ number. So such a number decomposes into a product of $3n+1$ numbers. Even this weaker property fails for normal divisibility polynomials. Every non-trivial one has $\Phi_{\langle 1\rangle}(x)=x-1$ as a factor, but plenty of them are indecomposable. For example, $\Phi_{\langle 2\rangle}(x)=x^2-1=(x-1)(x+1)$ is divisible by $\Phi_{\langle 1\rangle}(x)$, but does not factor into divisibility polynomials. The set of divisibility polynomials shows just how far multiplicatively closed sets can be from positive integers, where the prime factorization is unique.

\section{Compression of indecomposable polynomials}\label{S3}

Factorization of divisibility polynomials is a bit of a disappointment, even aside from non-uniqueness. Although $f_m(x)=x^m-1$ are all indecomposable, in a sense, they bring little new compared to $x-1$. Their divisibility sequences $f_m(x^n)=x^{mn}-1$ are rather boring subsequences of $x^{n}-1$, and if we pick a numerical value for $x$ the resulting numerical sequences are even more alike -- the picked value is simply replaced by its $m$-th power. We need to weed out this redundancy. 
\begin{definition}\label{PhixpDef}
A divisibility polynomial $f$ is called compressible into a divisibility polynomial $g$ if $f(x)=g(x^m)$, and it is called incompressible if no such $g$ exists. 
\end{definition}

To find out which $\Phi_M(x)$ can be compressed, i.e., written as $\Phi_\Lambda(x^n)$, we need to find out how $\Phi_\Lambda(x^n)$ {\it de}compress. We can start by decompressing their cyclotomic factors. One special case is straightforward by inspection of the roots on both sides: 
\begin{equation}\label{Phixp}
\Phi_m(x^p)=\begin{cases}\Phi_{mp}(x),\text{ if } p\,|\,m\\
\Phi_{m}(x)\Phi_{mp}(x),\text{ if } p\!\not|\,m\,,\end{cases}
\end{equation}
where $m$ is any positive integer, and $p$ is prime. For divisibility polynomials we can get a nicer formula, that does not split into cases. Recall that $\langle n\rangle:=\{d\in\N\,\big|\,d\,|\,n\}$, and $ST$ denotes the set of products of elements from sets $S$ and $T$. So $\langle n\rangle\Lambda$ consists of products of divisors of $n$ and elements of $\Lambda$.
\begin{theorem}[{\bf Decompression formula}]\label{TDecomp} Let $n\in\N$ and $\Lambda\subset\N$ be a finite saturated set. Then 
\begin{equation}\label{Decomp}
\Phi_\Lambda(x^n)=\Phi_{{\langle n\rangle\Lambda}}(x)=\prod_{d\in{\langle n\rangle\Lambda}}\Phi_d(x).
\end{equation}
\end{theorem}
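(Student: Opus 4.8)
The plan is to reduce the identity to a statement about roots of unity and their orders, exploiting that every polynomial in sight is a squarefree product of cyclotomic polynomials. First I would expand the left-hand side factorwise, $\Phi_\Lambda(x^n)=\prod_{m\in\Lambda}\Phi_m(x^n)$, and note that both sides are monic. The right-hand side $\Phi_{\langle n\rangle\Lambda}(x)=\prod_{d\in\langle n\rangle\Lambda}\Phi_d(x)$ is squarefree by construction, since distinct cyclotomic polynomials are coprime. So it suffices to prove that the left-hand side is squarefree as well and that the two polynomials have the same set of roots. (One could instead build $n$ up from its prime factors via \eqref{Phixp} and induct, but that forces an unpleasant case analysis and bookkeeping of $\langle n\rangle\Lambda$, which the root-counting approach avoids entirely.)

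For squarefreeness of the left-hand side I would argue factor by factor. Each $\Phi_m(x^n)$ is separable: its derivative $n\,x^{n-1}\Phi_m'(x^n)$ cannot vanish at a root $\zeta$, because $\zeta\neq0$ and $\Phi_m'(\zeta^n)\neq0$ (cyclotomic polynomials have simple roots). Moreover, two factors $\Phi_m(x^n)$ and $\Phi_{m'}(x^n)$ with $m\neq m'$ share no root, since a common root $\zeta$ would make $\zeta^n$ simultaneously a primitive $m$-th and a primitive $m'$-th root of unity, forcing $m=m'$. Hence $\Phi_\Lambda(x^n)$ is squarefree.

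The heart of the argument is matching the root sets. A root $\zeta$ of $\Phi_m(x^n)$ is exactly a root of unity, of some order $N$, with $\zeta^n$ of order $m$; using $\mathrm{ord}(\zeta^n)=N/\gcd(N,n)$, the left-hand side vanishes at $\zeta$ precisely when $N/\gcd(N,n)\in\Lambda$, while the right-hand side vanishes precisely when $N\in\langle n\rangle\Lambda$. So the whole theorem comes down to the purely arithmetic equivalence $N/\gcd(N,n)\in\Lambda\iff N\in\langle n\rangle\Lambda$ for every $N\in\N$. The forward direction is immediate: writing $g=\gcd(N,n)$, we have $N=g\cdot(N/g)$ with $g\mid n$ and $N/g\in\Lambda$, so $N\in\langle n\rangle\Lambda$.

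The converse is where the saturation hypothesis does the real work, and I expect it to be the only genuinely delicate point. Given $N=d\mu$ with $d\mid n$ and $\mu\in\Lambda$, I would write $n=dn'$ and compute $\gcd(N,n)=\gcd(d\mu,dn')=d\gcd(\mu,n')$, whence $N/\gcd(N,n)=\mu/\gcd(\mu,n')$. This exhibits $N/\gcd(N,n)$ as a divisor of $\mu$; since $\mu\in\Lambda$ and $\Lambda$ is saturated, its divisor $N/\gcd(N,n)$ lies in $\Lambda$ too. Combining both directions gives equality of the root sets, and together with squarefreeness this yields $\Phi_\Lambda(x^n)=\Phi_{\langle n\rangle\Lambda}(x)$.
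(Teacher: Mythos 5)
Your proof is correct, but it takes a genuinely different route from the paper's. The paper does not argue via roots at all: it first checks that $x-1$ divides $\Phi_\Lambda(x^n)$ with multiplicity $1$, invokes the slicing-factorization theorem (Theorem \ref{Slice}) to conclude that $\Phi_\Lambda(x^n)$ is an indecomposable normal divisibility polynomial and hence equals $\Phi_{\Lambda^{(n)}}(x)$ for \emph{some} finite saturated set $\Lambda^{(n)}$, and then identifies $\Lambda^{(n)}=\langle n\rangle\Lambda$ by treating $n=p$ prime via the case formula \eqref{Phixp} and inducting on the prime factorization of $n$, using $\langle mn\rangle=\langle m\rangle\langle n\rangle$. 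Your argument replaces that machinery with a self-contained root-counting computation: monicity plus squarefreeness of both sides (yours via the derivative of $\Phi_m(x^n)$ and disjointness of the root sets of distinct factors), together with the purely arithmetic equivalence $N/\gcd(N,n)\in\Lambda\iff N\in\langle n\rangle\Lambda$, whose converse direction is exactly where saturation enters, through the identity $N/\gcd(N,n)=\mu/\gcd(\mu,n')$ exhibiting it as a divisor of $\mu\in\Lambda$. What the paper's route buys is integration with its structural theory: the slicing theorem hands over saturation of the exponent set for free, and the proof doubles as an illustration of the factorization results of Sections \ref{S2'}--\ref{S3}. What yours buys is independence from Theorem \ref{Slice} and from \eqref{Phixp}, no case split on $p\mid m$, no induction, and a clean isolation of the single point where the saturation hypothesis is actually used. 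Both are complete proofs of the identity.
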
 
\begin{proof} By inspection, $\Phi_\Lambda(x^n)$ is a normal divisibility polynomial. Moreover, $x-1$ has multiplicity $1$ as a factor of $\Phi_\Lambda(x)$. No term of the form $x^n-\zeta$ is divisible by $x-1$, except when $\zeta=1$. Moreover, $x-1$ has multiplicity $1$ in $x^n-1$, and, therefore, $x-1$ has multiplicity $1$ in $\Phi_\Lambda(x^n)$ as well. So, by Theorem \ref{Slice}, $\Phi_\Lambda(x^n)$ is indecomposable, and $\Phi_\Lambda(x^n)=\Phi_{\Lambda^{(n)}}(x)$ for some finite saturated set, which, for the time being, we will denote $\Lambda^{(n)}$. It remains to show that $\Lambda^{(n)}=\langle n\rangle\Lambda$. 

First suppose that $n=p$ is prime. If $d\in\Lambda^{(p)}$ then $\Phi_d(x)$ is a factor of $\Phi_\Lambda(x^p)$. We see from \eqref{Phixp} that those are of the form $\Phi_{m}(x)$ and $\Phi_{mp}(x)$ with $m\in\Lambda$, so $d\in\langle p\rangle\Lambda$. Conversely, if $m\in\Lambda$ we need to show that $m,mp\in\Lambda^{(p)}$. That $\Phi_{mp}(x)$ is a factor of $\Phi_m(x^p)$ is immediate from \eqref{Phixp}, as it is for $\Phi_{m}(x)$ when $p\nmid m$. But if $p\,|\,m$ then $\frac{m}{p}\in\Lambda$ by the saturation property, and $\Phi_{m}(x)$ is always a factor of $\Phi_{\frac{m}{p}}(x^p)$. Hence, $m\in\Lambda^{(p)}$ also in this case. Thus, 
$\Lambda^{(p)}=\langle p\rangle\Lambda$ in both cases.

To finish the proof, we need a simple observation about $\langle mn\rangle$. If $d|mn$ then, factorizing all three into primes, we can find $d_1|m$ and $d_2|n$ such that $d=d_1d_2$. Conversely, if $d_1|m$ and $d_2|n$ then $d_1d_2|mn$. Summarizing, $\langle mn\rangle=\langle m\rangle\langle n\rangle$. The general case now follows by induction on the number of prime factors in $n=p_1\cdots p_N$ (some $p_i$ may repeat) since $\Phi_\Lambda(x^{(p_1\cdots p_{k})p_{k+1}})=\Phi_{\langle p_{k+1}\rangle\Lambda}(x^{p_1\cdots p_{k}})$ and $\langle p_{k+1}\rangle\langle p_1\cdots p_{k}\rangle=\langle p_1\cdots p_{k+1}\rangle$.
\end{proof}
\noindent A warning: even if $d$ can be represented as $d_1d_2$, with $d_1|n$ and $d_2\in\Lambda$, in several different ways $\Phi_d(x)$ still enters the product in \eqref{Decomp} only once.
\begin{figure}[!ht]
\begin{centering}
\includegraphics[scale=0.9]{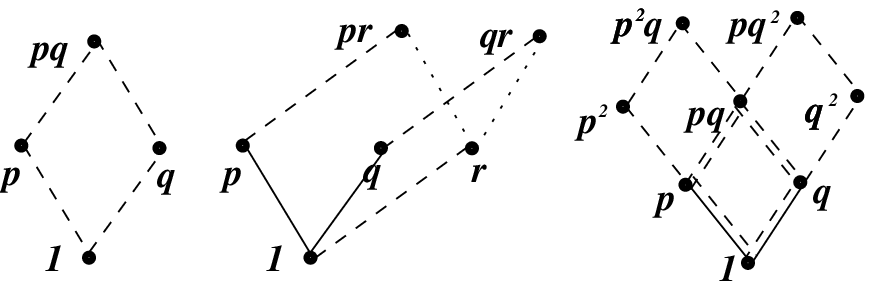}
\par\end{centering}
\caption{\label{DecomDiag}Decompression diagrams for $\Phi_{\langle1\rangle}(x^{pq})$, $\Phi_{\langle p,q\rangle}(x^r)$ and 
$\Phi_{\langle p,q\rangle}(x^{pq})$, $p,q,r$ are distinct primes. Adjoined (possibly twice) edges are dashed, additional edges are dotted.}
\end{figure}

When $\Lambda=\langle1\rangle=\{1\}$ the decompression formula reduces to the Gauss's $x^n-1=\prod_{d\,|\,n}\Phi_d(x)$. Geometrically, the Hasse diagram of $\langle n\rangle\Lambda$ is obtained by adjoining a copy of $\langle n\rangle$ to every node of $\Lambda$ (or equivalently a copy of $\Lambda$ to every node of $\langle n\rangle$), and possibly adding extra edges according to the diagram drawing rules, see Figure\,\ref{DecomDiag}. This suggests that to compress a diagram we need to look for a subdiagram in the shape of $\Lambda$, to which identical diagrams in the shape of $\langle n\rangle$ are attached. This is a non-trivial search for large diagrams. 
\begin{figure}[!ht]
\begin{centering}
\includegraphics[scale=0.8]{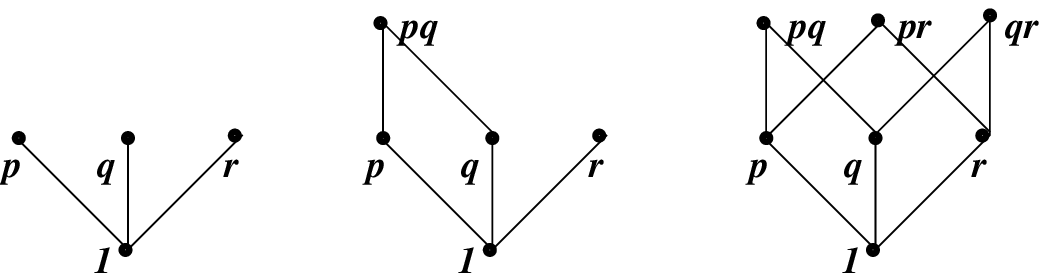}
\par\end{centering}
\caption{\label{Incomp}Incompressible diagrams, $p,q,r$ are distinct primes.}
\end{figure}

Fortunately, there is a much simpler way to detect compressibility. The reader may try to guess it by reflecting on the difference between the diagrams in Figures \ref{DecomDiag} and \ref{Incomp}. This is one of those cases where tricky geometry is streamlined by simple algebra. Since one can easily show that $\langle d\rangle\langle n_1,\dots,n_k\rangle=\langle dn_1,\dots,dn_k\rangle$ (by the same argument as for $\langle m\rangle\langle n\rangle=\langle mn\rangle$) we can rewrite \eqref{Decomp} as 
$$
\Phi_{\langle m_1,\dots,m_k\rangle}(x^d)=\Phi_{\langle dm_1,\dots,dm_k\rangle}(x).
$$ 
So $\Phi_{\langle n_1,\dots,n_k\rangle}$ is compressible if and only if $n_i$ have a non-trivial common divisor. 
\begin{corollary} $\Phi_{\langle n_1,\dots,n_k\rangle}(x)$ is incompressible if and only if $\textup{gcd}(n_1,\dots,n_k)=1$.  Any indecomposable normal divisibility polynomial is of the form $\Phi_{\langle n_1,\dots,n_k\rangle}(x^m)$, with $m\geq1$ and an incompressible $\Phi_{\langle n_1,\dots,n_k\rangle}(x)$. 
\end{corollary}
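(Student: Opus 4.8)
The plan is to treat the two assertions separately, leaning on the rewritten decompression identity $\Phi_{\langle m_1,\dots,m_k\rangle}(x^d)=\Phi_{\langle dm_1,\dots,dm_k\rangle}(x)$ and on the injectivity of $\Lambda\mapsto\Phi_\Lambda$ from Corollary \ref{1-1Cor}. Throughout I take $n_1,\dots,n_k$ to be the maximal elements of $\langle n_1,\dots,n_k\rangle$ (equivalently $n_i\nmid n_j$ for $i\neq j$), as in Theorem \ref{Slice}; this is what makes $\gcd(n_1,\dots,n_k)$ an invariant of the set, since a redundant generating set can have a larger gcd (for instance $\langle 4,6,3\rangle=\langle 4,6\rangle$ but $\gcd(4,6,3)=1\neq\gcd(4,6)$).

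For the ``if'' half of the first assertion I argue the contrapositive: if $d:=\gcd(n_1,\dots,n_k)>1$, write $n_i=dm_i$ and apply the identity to get $\Phi_{\langle n_1,\dots,n_k\rangle}(x)=\Phi_{\langle m_1,\dots,m_k\rangle}(x^d)$, exhibiting a nontrivial compression. The substantive direction is the converse. Suppose $\Phi_{\langle n_1,\dots,n_k\rangle}(x)=g(x^m)$ with $m\geq2$ and $g$ a divisibility polynomial. First I check $g$ is normal: the leading coefficient of $g(x^m)$ equals that of $g$, and its constant term is $g(0)$, the constant term of $g$; since $\Phi_{\langle n_1,\dots,n_k\rangle}$ is normal, $g$ inherits leading coefficient $1$ and a nonzero constant term, whence $g=\Phi_\mu$ for an order-reversing multiplicity map $\mu$.

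The key step is to show $g$ is itself indecomposable, so that the decompression formula (proved only for saturated sets) applies. I compute the multiplicity of $x-1$ in $g(x^m)=\prod_h\Phi_h(x^m)^{\mu(h)}$: since $1$ is a primitive $h$-th root of unity only for $h=1$, the factor $\Phi_h(x^m)$ vanishes at $x=1$ only when $h=1$, and $\Phi_1(x^m)=x^m-1$ carries $x-1$ with multiplicity $1$; hence that multiplicity equals $\mu(1)$. As $\Phi_{\langle n_1,\dots,n_k\rangle}$ has every cyclotomic factor simple, $\mu(1)=1$, so by Theorem \ref{Slice} $g=\Phi_\Lambda$ with $\Lambda=\langle m_1,\dots,m_l\rangle$ finite saturated. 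Now Theorem \ref{TDecomp} gives $g(x^m)=\Phi_{\langle mm_1,\dots,mm_l\rangle}(x)$, and injectivity (Corollary \ref{1-1Cor}) yields $\langle n_1,\dots,n_k\rangle=\langle mm_1,\dots,mm_l\rangle$. The maximal elements of the right-hand set lie among the $mm_j$, hence are all divisible by $m$; matching maximal elements forces $m\mid n_i$ for every $i$, so $\gcd(n_1,\dots,n_k)\geq m>1$. This proves the first assertion.

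The second assertion then falls out quickly. Given an indecomposable normal divisibility polynomial, Theorem \ref{Slice} writes it as $\Phi_{\langle N_1,\dots,N_k\rangle}$ with $N_i\nmid N_j$. Setting $m:=\gcd(N_1,\dots,N_k)$ and $N_i=mn_i$, the relations $n_i\nmid n_j$ persist (scaling by $m$ preserves divisibility) and $\gcd(n_1,\dots,n_k)=1$, so $\Phi_{\langle n_1,\dots,n_k\rangle}$ is incompressible by the first part; the identity then gives $\Phi_{\langle N_1,\dots,N_k\rangle}(x)=\Phi_{\langle n_1,\dots,n_k\rangle}(x^m)$ with $m\geq1$. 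I expect the main obstacle to be the indecomposability step for $g$: without knowing $\mu(1)=1$ one cannot invoke the decompression formula, and it is precisely the preservation of the multiplicity of $x-1$ under $x\mapsto x^m$ that pins it down. The bookkeeping about maximal elements, so that $\gcd$ is well defined and the comparison of the two saturated sets is legitimate, is the other place where care is needed.
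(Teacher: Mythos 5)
Your proposal is correct and follows essentially the same route as the paper: the paper deduces the corollary directly from the rewritten decompression identity $\Phi_{\langle m_1,\dots,m_k\rangle}(x^d)=\Phi_{\langle dm_1,\dots,dm_k\rangle}(x)$, and your argument is the natural filling-in of the details it leaves implicit (that a compressing $g$ must be normal and indecomposable, hence some $\Phi_\Lambda$ with $\Lambda$ saturated, plus the maximal-element bookkeeping that makes the gcd well defined). One trivial slip: in your parenthetical example the redundant generating set $\{4,6,3\}$ has \emph{smaller} gcd than $\{4,6\}$, not larger, though the example itself makes exactly the right point.
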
 
Now it is easy to give explicit ``exceptional" examples of divisibility polynomials that are distinctive. If $n_1, n_2$ are relatively prime then $x^{n_1}-1=\prod_{d\,|\,n_1}\Phi_d(x)$ and $x^{n_2}-1=\prod_{d\,|\,n_2}\Phi_d(x)$ have only one common cyclotomic factor, $\Phi_1(x)=x-1$, so 
$$
\Phi_{\langle n_1,n_2\rangle}(x)=\frac{(x^{n_1}-1)(x^{n_2}-1)}{x-1}\,.
$$
Similarly, if $\textrm{gcd}(n_1,n_2,n_3)=1$ then
\begin{equation}\label{Phi3Prime}
\Phi_{\langle n_1,n_2,n_3\rangle}(x)=\frac{(x^{n_1}-1)(x^{n_2}-1)(x^{n_3}-1)(x-1)}{(x^{\textrm{gcd}(n_1,n_2)}-1)(x^{\textrm{gcd}(n_2,n_3)}-1)
(x^{\textrm{gcd}(n_3,n_1)}-1)}\,.
\end{equation}
In general, one can use the inclusion-exclusion formula from combinatorics to express $\Phi_{\langle n_1,\dots,n_k\rangle}(x)$ for any $k$. When $\textrm{gcd}(n_i,n_j)=1$ for $i\neq j$ it simplifies to
\begin{equation}\label{PhikPrime}
\Phi_{\langle n_1,\dots,n_k\rangle}(x)=\frac{(x^{n_1}-1)\cdots(x^{n_k}-1)}{(x-1)^{k-1}}\,.
\end{equation}
Let us summarize what we discovered about the structure of the divisibility polynomials.
\begin{theorem}\label{DivPolyChar} Any divisibility polynomial $f(x)$ is of the form $f(x)=Cx^sg(x)$, where $C$ is a numerical constant, $s$ is a positive integer, and $g(x)$ is a product of factors of the form $\Phi_{\langle n_1,\dots,n_k\rangle}(x^d)$, with integers $d\geq0,n_i\geq1$, $n_i\nmid n_j$ for $i\neq j$, and $\textup{gcd}(n_1,\dots,n_k)=1$.
\end{theorem}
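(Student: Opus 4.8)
The plan is to read off Theorem \ref{DivPolyChar} as a synthesis of the structural results built up in Sections \ref{S2}--\ref{S3}: first strip away the part of $f$ carried by the root at the origin, and then decompose what remains using the slicing factorization and the compression criterion. None of the steps should require a new idea; the content lies in assembling the pieces in the right order and in checking that the ``normal part'' of $f$ is still a divisibility polynomial.

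First I would write $f(x)=Cx^sg(x)$ with $C$ a constant, $s\geq 0$, and $g$ normal, as noted at the start of Section \ref{S2}. The one point to verify by hand is that $g$ is itself a divisibility polynomial. This is immediate from divisibility: from $f(x)\mid f(x^n)$ and $f(x^n)=Cx^{sn}g(x^n)$ one gets $x^{s(n-1)}g(x^n)=g(x)q(x)$ for some polynomial $q$, and since $g$ has a nonzero constant term it is coprime to $x$, so $g(x)\mid g(x^n)$. Thus $g$ is a normal divisibility polynomial, and all the earlier machinery applies to it. By Corollary \ref{1-1Cor} we may write $g=\Phi_\lambda$ for an order-reversing multiplicity map $\lambda$. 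The slicing factorization (Theorem \ref{Slice}) then gives $g=\prod_{j=1}^{\lambda(1)}\Phi_{\Lambda_j}$ with each $\Phi_{\Lambda_j}$ an indecomposable normal divisibility polynomial, hence of the form $\Phi_{\langle n_1,\dots,n_k\rangle}$ whose generators are the maximal elements of $\Lambda_j$ and so satisfy $n_i\nmid n_j$ for $i\neq j$. Finally, the incompressibility corollary at the end of Section \ref{S3} lets me replace each such factor by its incompressible form: if $e:=\gcd(n_1,\dots,n_k)>1$ then $\Phi_{\langle n_1,\dots,n_k\rangle}(x)=\Phi_{\langle n_1/e,\dots,n_k/e\rangle}(x^e)$, with new generators $m_i:=n_i/e$ satisfying $\gcd(m_1,\dots,m_k)=1$. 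Writing each indecomposable factor this way (with $d=e$, or $d=1$ when it was already incompressible) yields exactly the claimed shape, so $g$, and hence $f=Cx^sg$, has the asserted form.

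The only things needing a moment's care, rather than a citation, are that the two conditions on the generators coexist and survive compression. The antichain condition is preserved because $n_i/e\mid n_j/e$ is equivalent to $n_i\mid n_j$, so dividing out the common factor $e$ turns an antichain with $\gcd>1$ into an antichain with $\gcd=1$; and $\gcd(m_1,\dots,m_k)=\gcd(n_1,\dots,n_k)/e=1$ by construction. The degenerate factors with $d=0$ would be constants and are absorbed into $C$, so strictly the genuine cyclotomic factors all have $d\geq 1$. Since every step beyond the opening reduction is a direct appeal to Corollary \ref{1-1Cor}, Theorem \ref{Slice}, and the incompressibility corollary, I expect no real obstacle: the theorem is essentially a restatement of those results in terms of the explicit building blocks $\Phi_{\langle n_1,\dots,n_k\rangle}(x^d)$.
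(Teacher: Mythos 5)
Your proposal is correct and takes essentially the same route as the paper: the paper offers no separate proof of Theorem \ref{DivPolyChar}, presenting it as a summary of Corollary \ref{1-1Cor}, the slicing factorization (Theorem \ref{Slice}), and the compression corollary, which is exactly the assembly you carry out. Your explicit check that the normal part $g$ is itself a divisibility polynomial (using that $g$ is coprime to $x$) is a detail the paper leaves implicit, and it is a worthwhile addition.
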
 

\section{Back to Mersenne and Fibonacci}\label{S3'}

We now have a characterization of the divisibility polynomials, and it is time to apply the fruits of our labor. To fully characterize linear divisibility sequences of integers, it ``only" remains to find conditions on $A$ and $\gamma_i$ in \eqref{BPPForm}, that would produce integer sequences. Unfortunately, this is a big only -- necessary and sufficient integrality conditions are not known even when all divisibility polynomials are $x$ or $x-1$ \cite{RWG}. We will find sufficient integrality conditions when there is a single divisibility polynomial factor, and construct many non-classical relatives of the Fibonacci and Mersenne numbers. 

It will be convenient to represent our sequences in a different form than \eqref{BPPForm}, using a  modification of the cyclotomic polynomials into polynomials in two variables. The modification is common in algebra, and is called {\it homogenization}: given a polynomial $f(x)$ we set $f(x,y):=y^{\textrm{deg}\,(f)}f(x/y)$. It leaves $x$ as is, but turns $x-1$ into $x-y$, and $x^n-1$ into $x^n-y^n$. If $f(x)$ is a product of polynomials then its homogenization is the product of their homogenizations. In particular, $\Phi_\Lambda(x,y):=\prod_{d\in\Lambda}\Phi_d(x,y)$, where $\Phi_d(x,y)$ are the homogenized cyclotomic polynomials: $\Phi_1(x,y)=x-y$, $\Phi_2(x,y)=x+y$, $\Phi_3(x,y)=x^2+xy+y^2$, $\Phi_4(x,y)=x^2+y^2$, $\Phi_6(x,y)=x^2-xy+y^2$, etc.

A key observation is that $\Phi_d(x,y)$ are symmetric in $x$ and $y$ for $d>1$. This is because $\Phi_d(x)$ for $d>1$ are {\it palindromic}, their coefficients read the same forward and backward. To prove this, note that $\frac{x^n-1}{x-1}$ is palindromic (all coefficients are $1$), and if $f(x)g(x)=h(x)$, where $f$ and $h$ are palindromic, then so is $g$. Dividing the Gauss's formula \eqref{AllDiv} by $x-1$ we have
\begin{equation}\label{AllDivx1}
\frac{x^n-1}{x-1}=\Phi_n(x)\prod_{d|n,1<d<n}\Phi_d(x)\,,
\end{equation}
Assuming $\Phi_d(x)$ are palindromic for $d<n$, $\Phi_n(x)$ is a quotient of palindromic polynomials, hence itself palindromic. By induction, $\Phi_n(x)$ are palindromic for all $n>1$. 

Now it is easy to figure out the secret behind $F_n=\frac{\alpha^n-\beta^n}{\alpha-\beta}$ being integers, despite the irrational values $\alpha=\frac{1+\sqrt{5}}2$ and $\beta=\frac{1-\sqrt{5}}2$.  By the fundamental theorem on symmetric polynomials, any such polynomial with integer coefficients is also a polynomial with integer coefficients in $x+y$ and $xy$, and $\alpha+\beta=1$, $\alpha\beta=-1$ are integers. We can generalize this integrality argument as follows.
\begin{theorem}\label{IntSeq} Let $\alpha,\beta$ be complex numbers such that $\alpha+\beta$ and $\alpha\beta$ are integers, and $f(x)$ be a normal divisibility polynomial. Then $A_n:=\frac{f(\alpha^n,\beta^n)}{f(\alpha,\beta)}$ is a linear divisibility sequence of integers.
\end{theorem}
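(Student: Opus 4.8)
The plan is to establish the three assertions---integrality, divisibility, and the linear recurrence---in that order, with integrality carrying the real content. The starting point is that, since $f$ is a divisibility polynomial, $f(x)\mid f(x^n)$, and because $f=\Phi_\lambda$ is a product of monic integer cyclotomic polynomials, the quotient $q_n(x):=f(x^n)/f(x)$ is itself a monic polynomial with integer coefficients (Gauss's lemma, or directly from the decompression formula). Homogenizing and using that homogenization is multiplicative, I get $f(x^n,y^n)=f(x,y)\,q_n(x,y)$ in $\Z[x,y]$, so that evaluating at $(\alpha,\beta)$ yields the clean identity $A_n=q_n(\alpha,\beta)$. Thus integrality of $A_n$ reduces to showing that the integer polynomial $q_n(x,y)$ takes integer values at $(\alpha,\beta)$.

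First I would prove that $q_n(x,y)$ is symmetric. Writing $m_1:=\lambda(1)$ for the multiplicity of $\Phi_1$ in $f$, and using that $\Phi_h(x,y)$ is symmetric for $h>1$ while $\Phi_1(x,y)=x-y$ is antisymmetric, one finds $f(y,x)=(-1)^{m_1}f(x,y)$ and likewise $f(y^n,x^n)=(-1)^{m_1}f(x^n,y^n)$. The sign $(-1)^{m_1}$ therefore cancels in the ratio $q_n=f(x^n,y^n)/f(x,y)$, giving $q_n(y,x)=q_n(x,y)$. By the fundamental theorem on symmetric polynomials, $q_n(x,y)=Q(x+y,xy)$ for some $Q\in\Z[u,v]$, and since $\alpha+\beta$ and $\alpha\beta$ are integers we conclude $A_n=q_n(\alpha,\beta)=Q(\alpha+\beta,\alpha\beta)\in\Z$. (Here I tacitly assume $f(\alpha,\beta)\neq0$ so that $A_n$ is defined; otherwise $\alpha/\beta$ would be a root of $f$, hence a root of unity.) This symmetry/cancellation step, handling the lone antisymmetric factor $(x-y)^{m_1}$, is the part I expect to require the most care.

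For divisibility, I would exploit the self-similar structure. Writing $n=md$, the multiplicative splitting $f(x^{md},y^{md})/f(x,y)=\big(f((x^m)^d,(y^m)^d)/f(x^m,y^m)\big)\cdot\big(f(x^m,y^m)/f(x,y)\big)$ evaluated at $(\alpha,\beta)$ gives $A_n=q_d(\alpha^m,\beta^m)\cdot A_m$. Now $q_d(\alpha^m,\beta^m)$ is again a symmetric integer polynomial evaluated at the pair $\alpha^m,\beta^m$, whose symmetric functions $\alpha^m+\beta^m$ (a power sum, an integer polynomial in $\alpha+\beta,\alpha\beta$ via Newton's identities) and $(\alpha\beta)^m$ are integers; so the integrality argument applies verbatim and $q_d(\alpha^m,\beta^m)\in\Z$. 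Hence $A_m\mid A_n$.

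Finally, for the linear recurrence I would expand the homogeneous form directly: if $f(x,y)=\sum_{i+j=D}c_{ij}x^iy^j$, then $f(\alpha^n,\beta^n)=\sum_{i+j=D}c_{ij}(\alpha^i\beta^j)^n$ is a fixed linear combination of $n$-th powers of the numbers $\rho_{ij}:=\alpha^i\beta^j$. Such an exponential sum satisfies the linear recurrence with characteristic polynomial $\prod_{\rho}(t-\rho)$ taken over the distinct values, and dividing by the constant $f(\alpha,\beta)$ leaves the recurrence unchanged, so $A_n$ satisfies it too. To see the coefficients may be taken in $\Z$, note that the $\rho_{ij}$ are algebraic integers (products of powers of the algebraic integers $\alpha,\beta$), and the antisymmetry of the support together with the Galois swap $\alpha\leftrightarrow\beta$ permutes the root set $\{\rho_{ij}\}$; hence the elementary symmetric functions of the roots are rational algebraic integers, i.e. rational integers, making the characteristic polynomial monic over $\Z$.
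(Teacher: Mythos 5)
Your proof is correct, and its core integrality mechanism is the same as the paper's: homogenize, observe that $q_n(x,y)=f(x^n,y^n)/f(x,y)$ is a polynomial with integer coefficients, show it is symmetric in $x$ and $y$, invoke the fundamental theorem on symmetric polynomials, and obtain divisibility by running the identical argument on $f(x^n,y^n)/f(x^m,y^m)$. The differences are still worth recording. Where the paper first reduces to an indecomposable factor $f=\Phi_\Lambda$ (products of integer divisibility sequences being integer divisibility sequences) and then uses the decompression formula to split the quotient as $\frac{x^n-y^n}{x-y}\prod_{d\in\Lambda,\,d>1}\Phi_d(x^n,y^n)/\Phi_d(x,y)$, checking symmetry factor by factor via palindromy, you handle a general $\Phi_\lambda$ in one stroke: the lone antisymmetric factor $(x-y)^{\lambda(1)}$ contributes the same sign $(-1)^{\lambda(1)}$ to numerator and denominator, so $q_n(y,x)=q_n(x,y)$ outright. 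This is slicker and bypasses both the reduction to indecomposables and the explicit splitting (though you still rely, as the paper does, on the symmetry of $\Phi_d(x,y)$ for $d>1$). You also supply two things the paper's proof leaves unsaid: the nondegeneracy remark that $f(\alpha,\beta)\neq0$ unless $\alpha/\beta$ is a root of unity, and, more substantially, an actual verification that $A_n$ satisfies a linear recurrence with integer coefficients---the exponential-sum expansion of $f(\alpha^n,\beta^n)$ together with the Galois swap $\alpha\leftrightarrow\beta$ permuting the roots $\alpha^i\beta^j$, which makes the characteristic polynomial monic over $\Z$. The paper claims ``linear'' in the statement but never proves it, so that last paragraph of yours fills a genuine gap in the written proof rather than duplicating it.
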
 
\begin{proof} Since normal divisibility polynomials are products of indecomposables, and products of integer sequences are integer sequences, it suffives to consider an indecomposable $f=\Phi_\Lambda$. By the decompression formula, 
$\frac{\Phi_\Lambda(x^n)}{\Phi_\Lambda(x)}=\Phi_{{\langle n\rangle\Lambda}}(x)/\Phi_\Lambda(x)$ is a product of cyclotomic polynomials. In particular, it is a polynomial with integer coefficients. Therefore, so is
$$
\frac{\Phi_\Lambda(x^n,y^n)}{\Phi_\Lambda(x,y)}=\frac{\prod_{d\in\Lambda}\Phi_d(x^n,y^n)}{\prod_{d\in\Lambda}\Phi_d(x,y)}=
\frac{x^n-y^n}{x-y}\prod_{d\in\Lambda,\,d>1}\frac{\Phi_d(x^n,y^n)}{\Phi_d(x,y)}\,.
$$
Moreover, the first factor is symmetric in $x$ and $y$, because it homogenizes a palindromic $\frac{x^n-1}{x-1}$, and the second factor is also symmetric, because $\Phi_d(x,y)$ are for $d>1$. By the fundamental theorem on symmetric polynomials, the product is a symmetric polynomial with integer coefficients in $x+y$ and $xy$, and its values are integers for $x=\alpha,y=\beta$. By the same reasoning, $\frac{\Phi_\Lambda(\alpha^n,\beta^n)}{\Phi_\Lambda(\alpha^m,\beta^m)}$ is an integer when $m|n$, so $A_m|A_n$ in $\Z$.
\end{proof}
As a first example, consider the simplest non-classical divisibility polynomial 
$$
f=\Phi_{\langle 2,3\rangle}=\Phi_1\Phi_2\Phi_3=(x-y)(x+y)(x^2+xy+y^2)=(x+y)(x^3-y^3)\,.
$$
The corresponding divisibility sequence is 
$$
A_n=\frac{x^n+y^n}{x+y}\cdot\frac{x^{3n}-y^{3n}}{x^3-y^3}=B_n\cdot C_n\,.
$$ 
Both factors satisfy linear recurrences of the second order, and can be generated easily:\\
\hspace*{1in} $B_0=\frac2{x+y},\ B_1=1,\ B_{n+2}=(x+y)B_{n+1}-xyB_n;$\\
\hspace*{1in} $C_0=0,\ C_1=1,\ C_{n+2}=(x^3+y^3)C_{n+1}-x^3y^3C_n.$\\
For the Fibonacci values $\alpha,\beta$, with $\alpha+\beta=1$ and $\alpha\beta=-1$, $B_n$ are none other than the Lucas numbers $L_n$, and $C_n$ is a normalized subsequence of the Fibonacci numbers: $\frac{F_{3n}}{F_3}=\frac12F_{3n}$. So $A_n=\frac12L_nF_{3n}$, see Table \ref{LucFib}. This construction can be generalized by picking any odd number $N$ instead of $3$, the resulting sequence is $A_n=L_nF_{N\!n}/F_N$, recurrent of order four. Some of these sequences were entered into the On-Line Encyclopedia of Integer Sequences by Bala in 2014 \cite{Bala}. 
\begin{table}[!ht]
\centering
\begin{tabular}{|l|llllllllll|}
\hline
$n$   & 0 & 1 & 2 & 3 & 4 & 5 & 6 & 7 & 8 & 9\\ \hline
$L_n$ & 2 & 1 & 3 & 4 & 7 & 11 & 18 & 29 & 47 & 76 \\ \hline
$F_{3n}/F_3$ & 0 & 1 & 4 & 17 & 72 & 305 & 1,292 & 5,473 & 23,184 & 98,209\\ \hline
$A_n$ & 0 & 1 & 12 & 68 & 504 & 3355 & 23,256 & 158,717 & 1,089,648 & 7,463,884\\ \hline
\end{tabular}
\caption{\label{LucFib} Non-classical linear divisibility sequence constructed from the Fibonacci numbers.}
\end{table}
\vspace{-1em}

\noindent A recurrent sequence of order six comes from $\Phi_{\langle 3,4\rangle}(x)=(x+1)(x^2+1)(x^3-1)$. For the Fibonacci values, we find $A_n=L_n\frac{L_{2n}}{L_2}\frac{F_{3n}}{F_3}=\frac16L_nL_{2n}F_{3n}$. Note that the extra factor $\frac{L_{2n}}{L_2}=2/3,1,7/3,6,47/3,41\dots$ not only is not a divisibility sequence, but is not even integer-valued.

To generalize them further, let us define 
$$
S_n^\Lambda(\alpha,\beta):=\frac{\Phi_\Lambda(\alpha^n,\beta^n)}{\Phi_\Lambda(\alpha,\beta)},
$$
We will abbreviate $S_n^{\langle 1\rangle}$ as $S_n$, so $M_n=S_n(2,1)$, $F_n=S_n\big(\frac{1+\sqrt{5}}2,\frac{1-\sqrt{5}}2\big)$, and suppress $\alpha,\beta$ from the notation when they are unimportant or understood. Any second order recurrent sequence with $S_0=0$ and $S_1=1$ is of this form, with $\Lambda=\langle 1\rangle$ and suitable $\alpha,\beta$. When $\Lambda=\langle N\rangle$ we get normalized subsequences of $S_n$, namely $S_n^{\langle N\rangle}=S_{N\!n}/S_{N}$. More generally, we can use the expression \eqref{PhikPrime} for $\Phi_\Lambda$. In the homogenized form, when $\textrm{gcd}(N_i,N_j)=1$ for $i\neq j$, we have:
$$
\Phi_{\langle N_1,\dots,N_k\rangle}(x,y)=\frac{(x^{N_1}-y^{N_1})\cdots(x^{N_k}-y^{N_k})}{(x-y)^{k-1}}\,.
$$
Therefore, 
\begin{equation}\label{SnNk}
S_n^{\langle N_1,\dots,N_k\rangle}=\frac{S_{N_{\!1}n}/S_{N_{\!1}}\cdots S_{N_{\!k}n}/S_{N_{\!k}}}{(S_n)^{k-1}}\,.
\end{equation}
We recover the sequences $A_n=L_nF_{N\!n}/F_N$, when $S_n=F_n$, from the well-known identity $L_n=F_{2n}/F_n$, and $F_2=1$.We leave it to the reader to produce more general sequences based on the expressions like \eqref{Phi3Prime}. When $\Lambda\neq\langle N\rangle$ these sequences are non-classical, and not covered by the extension of Lucas's theory developed in \cite{RWG}. Yet, all sequences from \eqref{SnNk} are divisors of a product of classical sequences. This is not accidental -- it is true for all non-degenerate linear divisibility sequences by a main result of \cite{BPP}.

\section{Strong divisibility}\label{S4}

The Fibonacci and Mersenne numbers have even more in common than being divisibility sequences. They are both {\it strong divisibility} sequences: $\textrm{gcd}(a_n,a_m)=a_{\textrm{gcd}(n,m)}$. This follows by a clever application of the Euclidean algorithm, for example. We will show that this property distinguishes them from their non-classical generalizations.

Let us start with the simplest non-classical sequence $f_n=\Phi_{\langle2,3\rangle}(x^n)$. By the decompression formula,\\
\hspace*{1in} $f_1=\Phi_1\Phi_2\Phi_3$\\
\hspace*{1in} $f_2=\Phi_1\Phi_2\Phi_3\Phi_4\Phi_6$\\
\hspace*{1in} $f_3=\Phi_1\Phi_2\Phi_3\Phi_6\Phi_9$\,.\\
\vspace{-0.5em}

\noindent Since $\Phi_d$ with different $d$ have no common roots $\textrm{gcd}(f_2,f_3)=\Phi_1\Phi_2\Phi_3\Phi_6\neq\Phi_1\Phi_2\Phi_3=f_1=f_{\textrm{gcd}(2,3)}$, so this sequence is not a strong divisibility sequence. It is natural to ask when $\Phi_\Lambda(x^n)$ is a strong divisibility sequence.

Recall that $\langle n\rangle$ denotes the set of all positive divisors of $n$, and $ST$ the set of products of elements from sets $S$ and $T$. Generalizing the above example, we see that the common cyclotomic factors of $\Phi_\Lambda(x^m)=\Phi_{\langle m\rangle\Lambda}(x)$ and $\Phi_\Lambda(x^n)=\Phi_{\langle n\rangle\Lambda}(x)$ are those with the indices from $\langle m\rangle\Lambda\cap\langle n\rangle\Lambda$. Therefore, $\Phi_\Lambda(x^n)$ is a strong divisibility sequence if and only if 
\begin{equation}\label{StDivSet}
\langle m\rangle\Lambda\cap\langle n\rangle\Lambda=\langle\textrm{gcd}(m,n)\rangle\Lambda\,.
\end{equation}
But we can give a much more explicit characterization.
\begin{theorem}\label{StrongDiv} Let $\Lambda\subset\N$ be a finite saturated set. Then $\Phi_\Lambda(x^n)$ is a strong divisibility sequence of polynomials if and only if $\Lambda=\langle N\rangle$ for some $N\geq1$.
\end{theorem}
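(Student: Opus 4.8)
The plan is to work entirely with the set-theoretic reformulation \eqref{StDivSet}. Since, as established just above the statement, $\Phi_\Lambda(x^n)$ is a strong divisibility sequence exactly when $\langle m\rangle\Lambda\cap\langle n\rangle\Lambda=\langle\gcd(m,n)\rangle\Lambda$ holds for all $m,n\in\N$, it suffices to prove that this family of set identities is equivalent to $\Lambda=\langle N\rangle$. Throughout I would use the already-noted product rule $\langle d\rangle\langle n_1,\dots,n_k\rangle=\langle dn_1,\dots,dn_k\rangle$, so that if $n_1,\dots,n_k$ are the maximal elements of $\Lambda$, then $\langle m\rangle\Lambda$ is precisely the set of divisors of at least one of the products $m\,n_i$.

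For the ``if'' direction, suppose $\Lambda=\langle N\rangle$. Then $\langle m\rangle\Lambda=\langle mN\rangle$ and $\langle n\rangle\Lambda=\langle nN\rangle$, so the left-hand side of \eqref{StDivSet} is the set of common divisors of $mN$ and $nN$, i.e. the divisors of $\gcd(mN,nN)$. Since $\gcd(mN,nN)=N\gcd(m,n)$, this equals $\langle N\gcd(m,n)\rangle=\langle\gcd(m,n)\rangle\langle N\rangle=\langle\gcd(m,n)\rangle\Lambda$, which is exactly the right-hand side. Hence \eqref{StDivSet} holds for every $m,n$, and this direction needs no further work.

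For the ``only if'' direction I would argue by contraposition: assuming $\Lambda\neq\langle N\rangle$, so that $\Lambda$ has at least two distinct maximal elements $n_i\neq n_j$, I would exhibit a single pair $m,n$ violating \eqref{StDivSet}. The tempting guess $t=\operatorname{lcm}(n_i,n_j)$ does not work in general (for $\Lambda=\langle 6,10,15\rangle$ one has $\operatorname{lcm}(6,10)=30$ dividing $\gcd(6,10)\cdot 15$); the correct witness is the full product $t=n_in_j$. Taking $m=n_i$ and $n=n_j$, one checks at once that $n_in_j$ divides both $n_in_j$ and $n_jn_i$, so $t\in\langle m\rangle\Lambda\cap\langle n\rangle\Lambda$. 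What remains is to show $t\notin\langle\gcd(m,n)\rangle\Lambda$, i.e. $n_in_j\nmid\gcd(n_i,n_j)\,n_l$ for every maximal $n_l$; using $n_in_j=\operatorname{lcm}(n_i,n_j)\gcd(n_i,n_j)$, this is equivalent to $\operatorname{lcm}(n_i,n_j)\nmid n_l$.

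The crux, and the only place where any care is needed, is this last non-divisibility. If $\operatorname{lcm}(n_i,n_j)\mid n_l$ for some maximal $n_l$, then $n_i\mid n_l$; since $n_i$ is maximal it divides no other element of $\Lambda$, forcing $n_l=n_i$, and then $n_j\mid n_l=n_i$ forces $n_j=n_i$ by the same maximality, contradicting $n_i\neq n_j$. Therefore $t=n_in_j$ lies in the intersection but not in $\langle\gcd(m,n)\rangle\Lambda$, so \eqref{StDivSet} fails for $m=n_i,\ n=n_j$ and $\Phi_\Lambda(x^n)$ is not a strong divisibility sequence. This establishes the contrapositive and completes the proof.
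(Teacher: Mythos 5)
Your proof is correct and follows essentially the same route as the paper's: the same reduction to the set identity \eqref{StDivSet}, the identical argument for the ``if'' direction, and, in the ``only if'' direction, the same key witness $mn$ lying in $\langle m\rangle\Lambda\cap\langle n\rangle\Lambda$ together with the factorization $mn=\operatorname{lcm}(m,n)\gcd(m,n)$. The only difference is organizational: the paper runs the computation directly to show that \eqref{StDivSet} forces $\Lambda$ to be closed under $\operatorname{lcm}$, hence $\Lambda=\langle N\rangle$ with $N=\operatorname{lcm}\{h\mid h\in\Lambda\}$, whereas you run the same computation contrapositively at a pair of distinct maximal elements, where $\operatorname{lcm}(n_i,n_j)\mid n_l$ contradicts maximality.
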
 
\begin{proof}First, suppose $\Lambda=\langle N\rangle$. Then we have 
$$
\langle m\rangle\langle N\rangle\cap\langle n\rangle\langle N\rangle=\langle mN\rangle\cap\langle nN\rangle
=\langle\textrm{gcd}(mN,nN)\rangle,
$$
because every common divisor of two numbers is a divisor of their $\textrm{gcd}$, and vice versa. Multiplying every line of the Euclidean algorithm on $m,n$ by $N$, we may conclude that $\textrm{gcd}(mN,nN)=\textrm{gcd}(m,n)N$. Therefore, 
$\langle\textrm{gcd}(mN,nN)\rangle=\langle\textrm{gcd}(m,n)\rangle\langle N\rangle$, and \eqref{StDivSet} holds, so $\Phi_\Lambda(x^n)$ is a strong divisibility sequence.

To prove the converse, we will first show that if \eqref{StDivSet} holds, and $m,n\in\Lambda$, then $\textrm{lcm}(m,n)\in\Lambda$. Clearly, $mn\in\langle m\rangle\Lambda\cap\langle n\rangle\Lambda=\langle\textrm{gcd}(m,n)\rangle\Lambda$. Therefore, there is $d\,|\,\textrm{gcd}(m,n)$ and $h\in\Lambda$ such that $mn=dh$. But then,
$$
h=\frac{mn}{\textrm{gcd}(m,n)}\cdot\frac{\textrm{gcd}(m,n)}{d}=\textrm{lcm}(m,n)\cdot\frac{\textrm{gcd}(m,n)}{d},
$$
so $\textrm{lcm}(m,n)\,|\,h$. Since $h\in\Lambda$, and $\Lambda$ is saturated, this implies $\textrm{lcm}(m,n)\in\Lambda$. As this is true for any $m,n\in\Lambda$, we have $\Lambda=\langle N\rangle$, where $N=\textrm{lcm}\{h\,|\,h\in\Lambda\}$.
\end{proof}
\begin{corollary} Every indecomposable divisibility polynomial that generates a strong divisibility sequence is compressible into $x-1$, i.e. it is of the form $x^{N}-1$.
\end{corollary}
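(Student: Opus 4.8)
The plan is to read this off from the results already established, chaining the slicing factorization (Theorem~\ref{Slice}), Theorem~\ref{StrongDiv}, and Gauss's formula~\eqref{AllDiv}. First I would pin down the form of an indecomposable divisibility polynomial $f$ that generates a strong divisibility sequence. Writing $f=Cx^sg(x)$ with $g$ normal, I would argue that $s=0$: since $g$ has nonzero constant term, $x$ is coprime to every $g(x^j)$, so the power-of-$x$ part contributes $x^{\min(sm,sn)}$ to $\textrm{gcd}\big(f(x^m),f(x^n)\big)$, whereas strong divisibility would demand the $x$-exponent $s\,\textrm{gcd}(m,n)$; for $s\ge1$ these already disagree at $m=2,n=3$. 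The constant $C$ is irrelevant, as the gcd of polynomials is defined only up to units. Hence $f$ is, up to a constant, a normal indecomposable divisibility polynomial, and by Theorem~\ref{Slice} all of its cyclotomic factors have multiplicity $1$, i.e. $f=\Phi_\Lambda$ for a finite saturated set $\Lambda$.

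With $f=\Phi_\Lambda$ generating a strong divisibility sequence, Theorem~\ref{StrongDiv} immediately forces $\Lambda=\langle N\rangle$ for some $N\ge1$. Gauss's formula~\eqref{AllDiv} then gives $\Phi_{\langle N\rangle}(x)=\prod_{d\mid N}\Phi_d(x)=x^N-1$. Finally, $x^N-1=g(x^N)$ with $g(x)=x-1=\Phi_{\langle1\rangle}(x)$, so $f$ is compressible into $x-1$ in the sense of Definition~\ref{PhixpDef}, which is exactly the assertion.

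Essentially every step is a direct citation, so there is no deep obstacle here; the only point needing care is the reduction to the normal case, i.e. ruling out a nontrivial $x^s$ factor, which is the short gcd computation sketched above. Everything downstream of identifying $f=\Phi_\Lambda$ is a mechanical application of Theorem~\ref{StrongDiv} followed by Gauss's formula, so I expect the writeup to be only a few lines.
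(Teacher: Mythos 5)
Your proof is correct and follows essentially the same route the paper intends for this corollary, which it leaves as an immediate consequence of Theorem~\ref{Slice} (indecomposable normal divisibility polynomials are exactly the $\Phi_\Lambda$), Theorem~\ref{StrongDiv} (strong divisibility forces $\Lambda=\langle N\rangle$), and Gauss's formula~\eqref{AllDiv} giving $\Phi_{\langle N\rangle}(x)=x^N-1=g(x^N)$ with $g(x)=x-1$. Your only addition is the explicit gcd argument ruling out a factor $x^s$, $s\ge1$ (and the harmless constant $C$), a reduction the paper leaves implicit; it is a correct and worthwhile bit of care.
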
 
\noindent In other words, strong divisibility sequences of polynomials are the classical ones. Does it extend to the corresponding integer sequences? The question is subtle because we may get $\pm1$ when substituting values into $\Phi_d(x,y)$, even though the polynomial itself is non-trivial. Barring such degeneracy, we can characterize which linear divisibility sequences defined in Section \ref{S3'} are strong divisibility sequences. 
\begin{theorem}\label{StDiv} Let $\alpha,\beta$ be complex numbers such that $\alpha+\beta$ and $\alpha\beta$ are integers, $\alpha/\beta$ is not a root of unity, and $|\Phi_d(\alpha,\beta)|\neq1$ for all large enough $d$. Then $S_n^\Lambda(\alpha,\beta)$ is a strong divisibility sequence of integers if and only if $\Lambda=\langle N\rangle$ for some $N\geq1$.
\end{theorem}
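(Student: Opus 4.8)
The statement is an equivalence, so the plan is to prove the two implications separately, with the forward (``if'') direction a quick reduction to the classical case and the reverse (``only if'') direction carrying the real weight. Throughout I would write $\Phi_d^{\ast}$ for $\Phi_d(\alpha,\beta)$, set $g=\textrm{gcd}(m,n)$, and start from the identity that the decompression formula (Theorem \ref{TDecomp}) gives after cancelling the common factor $\Phi_\Lambda$, using $\Lambda\subseteq\langle m\rangle\Lambda$:
\[
S_m^\Lambda=\frac{\Phi_\Lambda(\alpha^m,\beta^m)}{\Phi_\Lambda(\alpha,\beta)}=\prod_{d\in\langle m\rangle\Lambda\setminus\Lambda}\Phi_d^{\ast}.
\]
Thus every $S_m^\Lambda$ is a product of the cyclotomic values $\Phi_d^{\ast}$, and the whole problem becomes one of comparing integer gcd's of such products against the set-theoretic intersection $\langle m\rangle\Lambda\cap\langle n\rangle\Lambda$ already analysed in Theorem \ref{StrongDiv}.

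For the forward direction, when $\Lambda=\langle N\rangle$ the identity $S_n^{\langle N\rangle}=S_{Nn}/S_N$ is already recorded, so I only need that the plain Lucas sequence $S_n=\frac{\alpha^n-\beta^n}{\alpha-\beta}$ is a strong divisibility sequence of integers, which is the classical Euclidean-algorithm fact mentioned at the start of the section. Granting $\textrm{gcd}(S_a,S_b)=S_{\textrm{gcd}(a,b)}$, I would apply it with $a=Nm$, $b=Nn$ to get $\textrm{gcd}(S_{Nm},S_{Nn})=S_{N\textrm{gcd}(m,n)}$, and then divide through by $S_N$: since $S$ is a divisibility sequence, $S_N$ divides each of $S_{Nm}$, $S_{Nn}$, $S_{N\textrm{gcd}(m,n)}$, and pulling the common factor $S_N$ out of the gcd yields $\textrm{gcd}(S_m^{\langle N\rangle},S_n^{\langle N\rangle})=S_{\textrm{gcd}(m,n)}^{\langle N\rangle}$.

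The reverse direction I would prove by contraposition. If $\Lambda\neq\langle N\rangle$, then by Corollary \ref{<n>} it has at least two incomparable maximal elements $a,b$, and maximality together with saturation forces $c:=\textrm{lcm}(a,b)\notin\Lambda$. I then manufacture a large discrepancy index: for a parameter $t\in\N$ put $m=at$, $n=bt$, so $g=t\,\textrm{gcd}(a,b)$, and set $D=abt$. A short saturation argument, identical in spirit to the converse half of Theorem \ref{StrongDiv}, shows $D\in(\langle m\rangle\Lambda\cap\langle n\rangle\Lambda)\setminus\langle g\rangle\Lambda$ and $D\notin\Lambda$: indeed $D=(at)\!\cdot\!b=(bt)\!\cdot\!a$ exhibits $D$ in both intersecting sets, while any representation $D=d_1h$ with $d_1\mid g$ and $h\in\Lambda$ would give $c\mid h$ and hence $c\in\Lambda$. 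Consequently $\Phi_D^{\ast}$ is a factor of both $S_m^\Lambda$ and $S_n^\Lambda$ but of no $\Phi_e^{\ast}$ with $e\in\langle g\rangle\Lambda\setminus\Lambda$.

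It remains to promote this to a genuine integer obstruction, and this is the crux. Using the non-degeneracy hypothesis ($\alpha/\beta$ not a root of unity) together with $|\Phi_d^{\ast}|\neq1$ for large $d$, the classical primitive-divisor theory for Lucas sequences (Zsygmondy--Carmichael--Bilu--Hanrot--Voutier) provides, for $D=abt$ large enough, a prime $p$ whose rank of apparition in $S_n$ is exactly $D$; such a $p$ divides $\Phi_e^{\ast}$ precisely for $e\in\{D,Dp,Dp^2,\dots\}$ and satisfies $p\geq D-1$. Hence $p\mid\Phi_D^{\ast}$, so $p$ divides both $S_m^\Lambda$ and $S_n^\Lambda$, whereas the only indices $e$ with $p\mid\Phi_e^{\ast}$ are $D$ (excluded from $\langle g\rangle\Lambda$) and the $Dp^{j}$ with $j\geq1$, which are of size at least $D(D-1)\sim t^2$ and so eventually exceed $\max\langle g\rangle\Lambda\le t\,\textrm{gcd}(a,b)\max\Lambda\sim t$ once $t$ is large; thus $p\nmid S_g^\Lambda=S_{\textrm{gcd}(m,n)}^\Lambda$. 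Therefore $p\mid\textrm{gcd}(S_m^\Lambda,S_n^\Lambda)$ but $p\nmid S_{\textrm{gcd}(m,n)}^\Lambda$, and strong divisibility fails. I expect this final step to be the main obstacle: the polynomials $\Phi_d$ are pairwise coprime, but their values $\Phi_d^{\ast}$ need not be, so the argument must be routed through the existence of a \emph{primitive} prime divisor of $\Phi_D^{\ast}$ that both grows with $D$ and divides no nearby $\Phi_e^{\ast}$ --- which is exactly what the two arithmetic hypotheses on $\alpha,\beta$ are there to guarantee.
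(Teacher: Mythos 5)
Your reduction to the cyclotomic-value factorization $S_m^\Lambda=\prod_{d\in\langle m\rangle\Lambda\setminus\Lambda}\Phi_d(\alpha,\beta)$ and your combinatorial construction of the discrepancy index $D=abt$ match the paper's proof (the paper takes $m,n\in\Lambda$ with $\mathrm{lcm}(m,n)\notin\Lambda$ and scales by $k$; same idea). The genuine gap is the step you yourself call the crux. The primitive-divisor theorems you invoke (Zsygmondy, Carmichael, Bilu--Hanrot--Voutier) apply to Lucas pairs, i.e.\ they require $\alpha+\beta$ and $\alpha\beta$ to be \emph{coprime} integers, and the theorem being proved assumes no such coprimality. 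For instance, $\alpha,\beta=2\pm\sqrt2$ satisfy every stated hypothesis ($\alpha+\beta=4$, $\alpha\beta=2$, $\alpha/\beta=3+2\sqrt2$ is not a root of unity, and $|\Phi_d(\alpha,\beta)|>1$ for $d>2$ by Corollary \ref{PhiReal}), but $(4,2)$ is not a Lucas pair, so neither the existence of a prime of rank of apparition exactly $D$, nor the companion facts that $p\mid\Phi_e(\alpha,\beta)$ precisely for $e\in\{D,Dp,Dp^2,\dots\}$ and $p\geq D-1$, are available to you. The same hidden coprimality assumption also infects your ``if'' direction: $\gcd(S_a,S_b)=S_{\gcd(a,b)}$ for Lucas sequences is a theorem only when $\gcd(\alpha+\beta,\alpha\beta)=1$; for $\alpha,\beta=2\pm\sqrt2$ one has $\gcd(S_2,S_3)=\gcd(4,14)=2\neq1=S_1$. (The paper instead deduces the ``if'' part from the polynomial identity of Theorem \ref{StrongDiv}.)

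The deeper point is that no prime witness is needed at all, and the paper's own argument shows what the two arithmetic hypotheses are really for. Since $1\in\Lambda$, every factor $\Phi_d(\alpha,\beta)$ occurring in $S_{km}^\Lambda$, $S_{kn}^\Lambda$ and $S_{\gcd(km,kn)}^\Lambda$ has $d>1$ and is therefore an integer (symmetry of $\Phi_d(x,y)$ for $d>1$ plus the fundamental theorem on symmetric polynomials). Because $kmn$ lies in $\langle km\rangle\Lambda\cap\langle kn\rangle\Lambda$ but not in $\langle\gcd(km,kn)\rangle\Lambda$, the integer $S_{\gcd(km,kn)}^\Lambda\cdot\Phi_{kmn}(\alpha,\beta)$ divides both $S_{km}^\Lambda$ and $S_{kn}^\Lambda$, hence divides $\gcd(S_{km}^\Lambda,S_{kn}^\Lambda)$. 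Now $S_{\gcd(km,kn)}^\Lambda\neq0$ because $\alpha/\beta$ is not a root of unity, and $\Phi_{kmn}(\alpha,\beta)$ is a nonzero integer different from $\pm1$ for $k$ large enough, by the remaining hypothesis; hence $\bigl|\gcd(S_{km}^\Lambda,S_{kn}^\Lambda)\bigr|\geq 2\,\bigl|S_{\gcd(km,kn)}^\Lambda\bigr|>\bigl|S_{\gcd(km,kn)}^\Lambda\bigr|$, and strong divisibility fails. This is elementary, needs no growth estimates or laws of apparition, and, unlike the route through primitive divisors, is valid under exactly the hypotheses stated.
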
 
\begin{proof}If $\Lambda=\langle N\rangle$ the strong divisibility follows directly from Theorem \ref{StrongDiv}, even for the polynomials $\Phi_\Lambda(x^n,y^n)$. If $\Lambda\neq\langle N\rangle$, by the same argument as in the proof of the theorem,  there are $m,n\in\Lambda$ such that for any positive integer $k$ we have $kmn\in\langle km\rangle\Lambda\cap\langle kn\rangle\Lambda$, but $kmn\not\in\langle\textrm{gcd}(km,kn)\rangle\Lambda$. Therefore, $\textrm{gcd}(S_{km}^\Lambda,S_{kn}^\Lambda)$ multiplies $S_{\textrm{gcd}(km,kn)}^\Lambda$ by at least $\Phi_{kmn}(\alpha,\beta)$. Note that $S_n^\Lambda(\alpha,\beta)$ can only be $0$ for $n>0$ if $\alpha/\beta$ is a root of unity, and, by assumption, $\Phi_{kmn}(\alpha,\beta)\neq\pm1$ for large enough $k$. Thus, $S_n^\Lambda$ is not a strong divisibility sequence.
\end{proof}
\noindent For the sequences with real $\alpha,\beta$, like the Mersenne and Fibonacci numbers, we can rule out the numerical degeneracy easily.
\begin{corollary}\label{PhiReal} Let $\alpha,\beta$ be real numbers such that $\alpha+\beta$ and $\alpha\beta$ are integers, and $\alpha\beta\neq0$, $\alpha\neq\pm\beta$. Then $|\Phi_d(\alpha,\beta)|>1$ for $d>2$, and the only strong divisibility sequences among $S_n^\Lambda$ are the classical ones 
$S_{N\!n}/S_{N}$. 
\end{corollary}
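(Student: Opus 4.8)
The plan is to prove the two assertions separately: first the arithmetic bound $|\Phi_d(\alpha,\beta)|>1$ for $d>2$, and then to read off the classification of strong divisibility sequences directly from Theorem \ref{StDiv}. For the setup, note that since $\alpha,\beta$ are real with $\alpha\beta\neq0$ and $\alpha\neq\pm\beta$ their moduli must differ, so after relabeling I may assume $|\alpha|>|\beta|>0$. Homogenizing the defining product $\Phi_d(x)=\prod_{\gcd(k,d)=1}(x-\zeta_d^k)$ gives $\Phi_d(\alpha,\beta)=\prod_{\gcd(k,d)=1}(\alpha-\zeta_d^k\beta)$, a product of $\varphi(d)$ factors. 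The key structural fact is that for $d>2$ every primitive $d$-th root $\zeta_d^k$ is non-real, hence $\zeta_d^k\neq\pm1$. For each such $\zeta=\zeta_d^k$ the reverse triangle inequality yields $|\alpha-\zeta\beta|\geq|\alpha|-|\beta|$, and I would argue this is \emph{strict}: equality would force $\zeta\beta$ to lie on the same ray from the origin as the real number $\alpha$, i.e. $\zeta=t\,\alpha/\beta$ for some $t\geq0$, making $\zeta$ real and therefore $\pm1$, which is impossible for $d>2$. Taking the product over all $\varphi(d)$ factors then gives $|\Phi_d(\alpha,\beta)|>(|\alpha|-|\beta|)^{\varphi(d)}$.

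It remains to show $|\alpha|-|\beta|\geq1$, which I would handle by cases on the sign of $\alpha\beta$. Writing $s=\alpha+\beta$ and $p=\alpha\beta$ (both integers, with $s\neq0$), if $\alpha\beta>0$ the roots share a sign, so $|\alpha|-|\beta|=|\alpha-\beta|=\sqrt{(\alpha+\beta)^2-4\alpha\beta}=\sqrt{s^2-4p}$; reality together with $\alpha\neq\beta$ makes $s^2-4p$ a positive integer, so $\sqrt{s^2-4p}\geq1$. If $\alpha\beta<0$ the roots have opposite signs, so $|\alpha|-|\beta|=|\alpha+\beta|=|s|\geq1$ since $s$ is a nonzero integer. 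In either case $|\alpha|-|\beta|\geq1$, and combining this with the factor bound gives $|\Phi_d(\alpha,\beta)|>1$ for all $d>2$, as claimed.

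With the bound in hand, the second assertion is immediate. The inequality shows $|\Phi_d(\alpha,\beta)|\neq1$ for every $d>2$, in particular for all sufficiently large $d$; moreover $\alpha/\beta$ is real and $\neq\pm1$, hence not a root of unity. Both hypotheses of Theorem \ref{StDiv} are therefore met, and that theorem identifies the strong divisibility sequences among the $S_n^\Lambda(\alpha,\beta)$ as exactly those with $\Lambda=\langle N\rangle$; these are the classical sequences $S_n^{\langle N\rangle}=S_{Nn}/S_N$.

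The delicate point is not Theorem \ref{StDiv} but the sharpness of the estimate. The naive reverse-triangle bound only yields $|\Phi_d(\alpha,\beta)|\geq(|\alpha|-|\beta|)^{\varphi(d)}$, and for the Fibonacci values one has $|\alpha|-|\beta|=1$ exactly, so this weak bound is tight and by itself proves nothing. The whole argument hinges on the observation that for $d>2$ the primitive roots are never $\pm1$, which upgrades each factor inequality to a strict one and so rescues the conclusion precisely in this borderline case. Establishing $|\alpha|-|\beta|\geq1$ (where the reality condition $s^2>4p$ enters) and marrying it to this strictness is the main obstacle; everything else is bookkeeping.
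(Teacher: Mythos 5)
Your proof is correct and takes essentially the same route as the paper: lower-bound each factor $|\alpha-\zeta\beta|$ by the distance $\bigl||\alpha|-|\beta|\bigr|$ between the circles of radii $|\alpha|$ and $|\beta|$ (the paper phrases your reverse-triangle-inequality step geometrically), observe that equality forces $\zeta=\pm1$ so the bound is strict for $d>2$, use integrality of $\alpha+\beta$ and of $(\alpha-\beta)^2=(\alpha+\beta)^2-4\alpha\beta$ to get that distance $\geq1$, and then invoke Theorem \ref{StDiv}. Your sign-of-$\alpha\beta$ case split and the strictness point you emphasize are precisely the paper's ``$|\alpha+\beta|$ or $|\alpha-\beta|$, depending on the signs'' and ``equality only if $\zeta=\pm1$'' remarks.
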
 
\begin{proof} From definition, $\Phi_d(x,y)$ is the product of $x-\zeta y$, where $\zeta$ runs over all primitive roots of unity of order $d$. Consider two circles centered at the origin with the radii $|\alpha|$ and $|\beta|$. Then $\zeta\beta$ is on the second circle, since $|\zeta|=1$, and its distance $|\alpha-\zeta\beta|$ to $\alpha$ is no less than the distance between the circles, $|\alpha+\beta|$ or $|\alpha-\beta|$, depending on the signs of $\alpha,\beta$. Moreover, since $\alpha,\beta$  are real the distance can equal $|\alpha\pm\beta|$ only if $\zeta=\pm1$, , i.e.  the order of $\zeta$ is $1$ or $2$. But $\alpha+\beta$ is an integer, so $|\alpha+\beta|\geq1$, unless $\alpha=-\beta$. Similarly, $(\alpha-\beta)^2=(\alpha+\beta)^2-4\alpha\beta$ is an integer, so $|\alpha-\beta|\geq1$, unless $\alpha=\beta$. Thus, $|\alpha-\zeta\beta|>1$ for $\zeta\neq\pm1$, and $|\Phi_d(\alpha,\beta)|>1$ for $d>2$. The second conclusion follows from Theorem \ref{StDiv}.
\end{proof}
\noindent This implies that of the divisibility sequences we constructed in Section \ref{S3'} the only strong divisibility sequences are the classical ones, $S_n$ and $S_{N\!n}/S_{N}$. One can show that the conditions of Theorem \ref{StDiv} also hold for complex conjugate $\alpha,\beta$, when $\alpha\beta\neq0$ and $\alpha/\beta$ is not a root of unity. But the proof relies on a non-elementary estimate of Stewart \cite{Stew} for $|\Phi_d(\alpha,\beta)|$, and we omit it.  
\bigskip

\noindent {\small {\bf Acknowledgements:} } The author is grateful to Brian Pasko and anonymous referees for their comments on a previous version of this paper, and to Lawrence Somer for valuable corrections to the final version.

\end{document}